\DeclarePairedDelimiter{\form}{\langle}{\rangle}
\newcommand\on[1]{\operatorname{#1}}
    \newcommand\ba{\begin{align*}}
    \newcommand\ea{\end{align*}}
    \newcommand\be{\begin{enumerate}}
    \newcommand\ee{\end{enumerate}}
    \newcommand\bpf{\begin{proof}}
    \newcommand\epf{\end{proof}}
    \newcommand\bpp{\begin{prop}}
    \newcommand\epp{\end{prop}}
    \newcommand\bpb{\begin{prob}}
    \newcommand\epb{\end{prob}}
    \newcommand\bd{\begin{defn}}
    \newcommand\ed{\end{defn}}
    \newcommand\bh{\begin{hint}}
    \newcommand\eh{\end{hint}}
  \newcommand{\flows}[0]{\mathrm{flows}}
  \newcommand{\param}[0]{\mathrm{param}}
  \newcommand{\MMc}{\mathcal{M}^c_{(\mathrm{vol})}}
  \newcommand{\MMv}{\mathcal{M}_{(\mathrm{vol})}}
   \newcommand{\CLM}{\mathcal{C}_{\mathrm{LM}}}
    \newcommand\bN{\mathbb{N}}
    \newcommand\N{\mathbb{N}}
    \newcommand\bR{\mathbb{R}}
    \newcommand\bQ{\mathbb{Q}}
    \newcommand\Z{\mathbb{Z}}
    \newcommand\CC{\mathcal{C}}
    \newcommand\EE{\mathcal{E}}
    \newcommand\DD{\mathcal{D}}
    \newcommand\MM{\mathcal{M}}
    \newcommand{\R}[0]{\mathbb{R}}
\DeclareMathOperator\AGAPE{AGAPE}
\newcommand\vol{\operatorname{vol}}
    \newcommand\supp{\operatorname{supp}}
    \newcommand\Id{\operatorname{Id}}
    \DeclareMathOperator\Homeo{Homeo}
    \newcommand\sse{\subseteq}
    \newcommand\co{\colon}
    \DeclareMathOperator\im{im}
    \DeclareMathOperator\fix{fix}
    \DeclareMathOperator\Diff{Diff}
    \DeclareMathOperator\RO{RO}
    \DeclareMathOperator\inte{int}
    \DeclareMathOperator\cl{cl}
    \DeclareMathOperator\appl{appl}
    \DeclareMathOperator\Th{{Th}}
    \DeclareMathOperator\ACT{{ACT}}
\newcommand\suppe{\operatorname{{supp}^{\mathrm{e}}}}
    \def\thetitle{Elementary equivalence and diffeomorphism groups of smooth manifolds}
    \def\theauthors{{J. de la Nuez Gonz\'alez, Sang-hyun Kim, Thomas Koberda}}
    \theoremstyle{plain}
    \newtheorem*{mainthm}{Main Theorem}
    \newtheorem{thm}{Theorem}[section]
    \newtheorem{lem}[thm]{Lemma}
    \newtheorem{lemma}[thm]{Lemma}
    \newtheorem{cor}[thm]{Corollary}
    \newtheorem{prop}[thm]{Proposition}
    \newtheorem*{claim*}{Claim}
    \newtheorem{thmA}{Theorem}
    \theoremstyle{remark}
    \newtheorem{rem}[thm]{Remark}
    \theoremstyle{definition}
    \newtheorem{defn}[thm]{Definition}
    \newtheorem{prob}{Problem}[section]
\begin{document}
    \title[Elementary equivalence and diffeomorphism groups]\thetitle
    \date{\today}

     \keywords{Smooth manifold; first order rigidity; diffeomorphism group}
    \subjclass[2020]{Primary:  20A15, 57S05; Secondary: 03C07, 57S25, 57R50, 57R55, 	57R60}
\title\thetitle
    \date{\today}
    \author[S. Kim]{Sang-hyun Kim}
    \address{School of Mathematics, Korea Institute for Advanced Study (KIAS), Seoul, 02455, Korea}
    \email{skim.math@gmail.com}
    \urladdr{https://www.kimsh.kr}

    \author[T. Koberda]{Thomas Koberda}
    \address{Department of Mathematics, University of Virginia, Charlottesville, VA 22904-4137, USA}
    \email{thomas.koberda@gmail.com}
    \urladdr{https://sites.google.com/view/koberdat}
    
    \author[J. de la Nuez Gonz\'alez]{J. de la Nuez Gonz\'alez}
    \address{School of Mathematics, Korea Institute for Advanced Study (KIAS), Seoul, 02455, Korea}
    \email{jnuezgonzalez@gmail.com}

\begin{abstract}
We interpret smooth structures of manifolds by the first order theory of manifold diffeomorphism groups. Let $M$ and $N$ be smooth manifolds, with $M$ closed and connected. If the $C^r$--diffeomorphism group of $M$ is elementarily equivalent to the $C^s$--diffeomorphism group of $N$ for some $r,s\in[1,\infty]$, then $r=s$ and $M$ and $N$ are diffeomorphic. This strengthens a previously known result by Takens and Filipkiewicz, which asserts that for integers $r$ and $s$, if $\Diff^r(M)$ is isomorphic to $\Diff^s(N)$ as abstract groups then the regularities $r$ and $s$ are the same and the underlying manifolds are diffeomorphic. We prove an analogous result for groups of diffeomorphisms preserving smooth volume forms, in dimension at least two. Along the way, we obtain a group sentence in $\Diff^r(M)$ expressing that there exists a manifold homeomorphic but not diffeomorphic to $M$.
\end{abstract}
    
    \maketitle
    
\setcounter{tocdepth}{1}
\tableofcontents
    

\section{Introduction}
In this paper, we initiate the study of first order rigidity phenomena among manifold diffeomorphism groups.

For a smooth (i.e. $C^\infty$) manifold $M$ and for $r\in[1,\infty]\cup\{0\}$, we let $\Diff^r(M)$ denote the $C^r$ diffeomorphism group of $M$. 
When $M$ is orientable and equipped with a smooth volume form $\omega$, we let $\Diff^r(M,\omega)$ denote the volume-preserving subgroup. We often make the volume form implicit and write $\Diff^r_{\vol}(M)$ instead. 
We denote by $\Diff^r_0(M)$ and $\Diff^r_{0,\vol}(M)$ the identity components of the corresponding groups. When $r=0$, we prefer to use the
notation $\Homeo$ instead of $\Diff^0$. For statements that apply equally to a full group of diffeomorphisms or to a group of volume-preserving diffeomorphisms, we will write $\Diff^r_{(\vol)}$ and $\Diff^r_{0(,\vol)}$ when either group can be substituted.

Recall that the \emph{theory} $\Th{G}$ of a group $G$ is the set of all the first-order sentences in the language of group theory (i.e. the set of finite sequences of identity, composition and inverse symbols as well as logical symbols and variables) that are valid in $G$. We say two groups are \emph{elementarily equivalent} and write $G\equiv H$ if $\Th{G}=\Th{H}$. The goal of this paper is to establish the following result on the first order theory of manifold diffeomorphism groups.

\begin{mainthm}\label{thm:main}
Suppose we have two smooth manifolds $M$ and $N$, such that $M$ is closed and connected.
For some $r,s\in\{0\}\cup[1,\infty]$, we assume one of the following.
\begin{enumerate}[(i)]
    \item Some elementarily equivalent groups $G$ and $H$ satisfy
$$\Diff^r_{0}(M)\leq G\leq \Diff^r(M)$$ and 
$$\Diff^s_{0}(N)\leq H\leq \Diff^s(N).$$
\item 
The manifolds $M$ and $N$ are orientable, of dimension at least two, equipped with smooth volume forms, and some elementarily equivalent groups $G$ and $H$ satisfy
$$\Diff^r_{0,\vol}(M)\leq G\leq \Diff^r_{\vol}(M)$$ and 
$$\Diff^s_{0,\vol}(N)\leq H\leq \Diff^s_{\vol}(N).$$
\end{enumerate} 
Then in each of the cases, we have that $r=s$ and that $M$ and $N$ are homeomorphic; furthermore, if $r,s\ge1$, then $M$ and $N$ are diffeomorphic.
\end{mainthm}

\begin{rem}\label{rem:topology}
    We do not consider the case that $0<r<1$, since $\Diff^r(M)$ is not closed under composition in general; see~\cite[Chapter A.2]{KK2021book}.
\end{rem}

It is known from the work of Takens, Bochner--Montgomery, and
Filipkiewicz~\cite{BM1945,Takens1979,filipkiewicz} that for integer regularities, two diffeomorphism groups of closed manifolds are isomorphic as abstract groups if and only if the regularities agree and the isomorphism is induced by a diffeomorphism of that regularity between the underlying manifolds. In fact, it is a consequence of the Generalized Takens--Filipkiewicz Theorem~\cite[Theorem 3.1.2]{KK2021book} that the Main Theorem holds with a stronger assumption that $G$ and $H$ are isomorphic groups (rather than merely
elementarily equivalent), and that $r,s\in
\mathbb{Z}_{+}\cup\{\infty\}$.
However, it was an open question as to whether or not the Takens--Filipkiewicz Theorem holds for non-integer regularities; the main difficulty of the question was the lack of a generalization of Bochner--Montgomery's result (Theorem~\ref{thm:bochner-montgomery}) to such regularities. Our Main Theorem implies a positive answer. 

\begin{cor}\label{cor:isomorphic}
Let $\MM_0$ be the class of pairs $(M,\Diff^r_0(M))$ such that $M$ is a closed, connected and smooth manifold, and such that $r\in[1,\infty]$. 
If two pairs $(M,\Diff^r_0(M))$ and $(N,\Diff^s_0(N))$ in the class $\MM_0$ satisfy $G\cong H$,
then $r$ is equal to $s$ and $M$ is diffeomorphic to $N$.
\end{cor}
It is worth noting that Corollary~\ref{cor:isomorphic} is an instance of a purely geometric topology result first proved via applied logic. The reader may find a further treatment of the algebraic structure of diffeomorphism groups in dimension one in~\cite{Navas2011,CKK2019,KKM2019} and the references therein, for example.

Main Theorem fits in the framework of research on the first order rigidity of groups. We say a class of structures $\CC$ is \emph{first order rigid} if whenever two structures $G$ and $H$ of $\CC$ are elementarily equivalent then $G$ and $H$ are isomorphic. The first order rigidity was established in various structures such as rings~\cite{leloup,greenfeld}, lattices in higher rank~\cite{avni-etal} and linear groups~\cite{plotkin}. The authors previously proved the first order rigidity of the class of pairs $(M,\Homeo(M))$, where $M$ is a compact topological manifold (possibly with boundary and possibly non-smooth)~\cite{kkdln2025}. Main Theorem immediately implies that the class $\MM_0$ as above is first order rigid.

\textit{From now on, we adopt the convention that all manifolds under consideration are orientable and of dimension at least two, whenever $\Diff^r_{\vol}$ and $\Diff^r_{0,\vol}$ are discussed.}
We prove the Main Theorem largely in two steps, as described by the following two theorems.
In the first step, we construct a group theoretic sentence expressing that ``My underlying manifold is diffeomorphic to $M$'':
\begin{thmA}[Manifold recognition]\label{thm:mfd}
For all closed, connected, smooth
manifolds $M$, there exists a sentence $\phi_M$ such that the following hold:
\begin{enumerate}[(i)]
\item\label{p:mfd1} if we have $\Diff^\infty_{0(,\vol)}(M)\le G\le \Diff^1_{(\vol)}(M)$, then $\phi_M\in\Th G$;
\item\label{p:mfd2} if a group $H$ satisfies $\Diff^\infty_{0(,\vol)}(N)\le H\le \Diff^1_{(\vol)}(N)$
for some closed, connected, smooth
manifold $N$
and if $\phi_M\in\Th H$, then $M$ and $N$ are diffeomorphic.
\end{enumerate}
\end{thmA}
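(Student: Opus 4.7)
The plan is to encode the smooth diffeomorphism type of $M$ as a single first-order property of its diffeomorphism group, by exhibiting a finite combinatorial invariant that both determines $M$ up to $C^\infty$-diffeomorphism and is detectable first-orderly in the group.

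First I would fix a finite good cover $\{U_1,\ldots,U_k\}$ of $M$ by smoothly embedded open balls such that every non-empty multi-intersection $U_{i_1}\cap\cdots\cap U_{i_\ell}$ is a disjoint union of a known number of embedded open balls. The nerve of such a cover, together with the pattern of multi-intersections and their component counts, determines $M$ up to smooth diffeomorphism (for instance via handle calculus or \v{C}ech-type reconstruction from good covers). So it suffices to recognise this finite combinatorial datum inside any group $G$ with $\Diff^\infty_{0(,\omega)}(M)\le G\le \Diff^1_{(\omega)}(M)$.

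The technical core is to produce first-order formulas, in the language of groups, for the following support predicates: disjointness $\mathrm{Disj}(x,y)$, containment $\supp(x)\subseteq\supp(y)$, the ball-support property $\mathrm{Ball}(x)$, and the topological type of $\supp(x)\cap\supp(y)$ (number of components, each either a ball or not). The tools I would use are fragmentation and uniform perfectness of the smooth identity component (Mather--Thurston--Epstein in the full case, Banyaga and others in the volume-preserving case) applied to $\Diff^\infty_{0(,\omega)}(M)\le G$, which supplies the required test elements; this is the same type of machinery, adapted to the smooth and volume-preserving setting, as was developed for homeomorphisms in~\cite{kkdln2025}. The sentence $\phi_M$ then asserts the existence of witnesses $g_1,\ldots,g_k$ such that $\mathrm{Ball}(g_i)$ holds for every $i$, the combinatorial pattern of $\supp(g_i)\cap\supp(g_j)$ (and higher intersections) matches the fixed cover, and the supports jointly cover the ambient manifold. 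The covering clause is expressed first-orderly as a \emph{uniform} Epstein-style fragmentation: every element of $G$ factors as a bounded product of conjugates of elements supported in some $\supp(g_i)$.

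Part~(i) is then immediate by taking witnesses $g_i$ with $\supp(g_i)=U_i$ from the chosen cover. For part~(ii), the witnesses for $\phi_M$ in $H$ reconstruct on $N$ the same combinatorial cover datum, forcing $M\cong_{C^\infty} N$. The main obstacle is the uniform first-order definability of $\mathrm{Ball}(x)$ and of the intersection-topology predicates across all regularities $r\in[1,\infty)\cup\{0,\infty\}$ and across both the full and the volume-preserving settings. The mixed-regularity inclusion $\Diff^\infty_{0(,\omega)}(M)\le G\le \Diff^1_{(\omega)}(M)$ is particularly delicate: the formulas must be valid in the low-regularity ambient group while being verifiable by smooth test elements, and the volume-preserving case requires replacing classical fragmentation with its volume-preserving counterpart while keeping the number of factors uniformly bounded so that the whole assertion remains a single sentence.
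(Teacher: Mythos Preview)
There is a genuine gap at the heart of your approach: the combinatorial datum you propose---the nerve of a good cover together with component counts of all multi-intersections---does \emph{not} determine the smooth diffeomorphism type of $M$. Consider any exotic $n$-sphere $\Sigma$ with $n\ge 7$: both $\Sigma$ and the standard $S^n$ admit good covers by two smoothly embedded balls whose intersection is a single copy of $S^{n-1}\times(0,1)$. The combinatorial pattern is identical, yet the manifolds are not diffeomorphic. The smooth structure lives in the \emph{gluing map} between charts, i.e.\ in analytic coordinate data, not in the incidence combinatorics of the cover. Your ``\v{C}ech-type reconstruction'' recovers at most the homotopy type (via the Nerve Lemma), and even with substantial extra work at most the homeomorphism type; it cannot see exotic smooth structures. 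Relatedly, your predicate $\mathrm{Ball}(x)$ as described does not distinguish a smoothly embedded ball from a merely topological one, so even the individual pieces of your cover are not pinned down in the smooth category.

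The paper's argument is accordingly of a different character. After interpreting second-order arithmetic, points, and parametrized topological balls inside $G$ (carrying over the machinery of~\cite{kkdln2025}), it upgrades topological balls to $C^1$-smooth parametrized balls via the Bochner--Montgomery theorem (Proposition~\ref{prop:smooth-ball}): one requires that a certain conjugated $\bR^n$-action on the ball land in $G\le\Diff^1$, which forces the parametrization itself to be $C^1$. With genuinely smooth charts in hand, one fixes a Whitney embedding $M\hookrightarrow\bR^{2n+1}$ encoded by a tuple $\underline f\in\EE^\infty(n,k)$, and the sentence $\phi_{M,1}$ asserts the existence of a smooth atlas on the ambient manifold whose image in $\bR^{2n+1}$ agrees with $\underline f$ in derivatives up to a specified finite order, to within a prescribed tolerance. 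A $C^\infty$ version of the Cheeger--Kister stability lemma (Lemma~\ref{lem:c-infty-ck}) then guarantees that any such nearby parametrized cover yields a diffeomorphic manifold. In short, the paper encodes actual derivative data of a smooth embedding into Euclidean space, which is precisely what is needed to separate distinct smooth structures; the intersection combinatorics of a cover cannot do this.
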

Note that the sentence $\phi_M$ may differ in the  volume-preserving case. In the second step, we find a formula $\rho_q(g)$ implying (but not necessarily equivalent to) the statement that ``the diffeomorphsim $g$ is not $C^s$ for $s>q$''.

\begin{thmA}[Regularity detection]\label{thm:reg}
For a closed, connected, smooth manifold $M$, and for a rational number $q\ge1$, there exists a formula $\rho_q(x)=\rho^{(\vol)}_{M,q}(x)$ such that whenever a group $G$ satisfies
\[
\Diff^\infty_{0(,\vol)}(M)\le G\le\Diff^1_{(\vol)}(M),\]
the following conclusions hold:
\begin{enumerate}[(i)]
\item\label{p:reg1} If $\Diff^q_{0(,\vol)}(M)\le G$, then $\rho_q(g)\in\Th G$ for some $g\in G$;
\item\label{p:reg2} If $\rho_q(g)\in \Th G$ for some $g\in G$, then $g$ does not belong to $\bigcup_{s>q}\Diff^s(M)$.
\end{enumerate}
\end{thmA}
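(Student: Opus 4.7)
The plan is to combine the definability machinery underlying Theorem~A with classical rigidity theorems for one-dimensional dynamics (Kopell's lemma, Plante--Thurston, Bonatti--Crovisier, and the Farb--Franks/Parkhe estimates for nilpotent actions by $C^{1+\alpha}$-diffeomorphisms) in order to build, for each rational $q\ge 1$, a first-order formula describing an algebraic configuration in which $x$ is forced to have regularity no better than $q$.

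First I would use the hypothesis $\Diff^\infty_{0(,\omega)}(M)\le G$ to populate $G$ with a rich supply of smooth diffeomorphisms supported in arbitrarily small charts, and then invoke the expressive power established for Theorem~A to recover, by group-theoretic formulas, the notion of \emph{``$x$ has a fixed point near which suitable companions $y_1,\dots,y_n$ isolate a one-dimensional invariant arc''}. Concretely, this amounts to asserting the existence of commuting elements with shrinking supports, detected via iterated commutation and the support-comparison predicates developed for Theorem~A, whose joint fixed-point set effectively reduces the dynamics of $x$ to a one-dimensional germ at a fixed point.

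Once the problem is localized to such a one-dimensional germ, I would exploit sharp regularity thresholds for nilpotent and solvable group actions on $[0,1]$: for each rational $q\ge 1$ one can find a finitely generated group $\Gamma_q$ admitting a natural representation into $\Diff^q([0,1])$ in which a distinguished generator sits at regularity exactly $q$, while Plante--Thurston--Parkhe-type bounds forbid any embedding of $\Gamma_q$ into $\Diff^s([0,1])$ for $s>q$. The formula $\rho_{M,q}(x)$ would then declare that $x$, together with suitable companions, realizes $\Gamma_q$ in the local germ at a fixed point. Completeness~(i) would be witnessed by transplanting the explicit model action of $\Gamma_q$ by $C^q$-diffeomorphisms into a chart of $M$ inside $\Diff^q_{0(,\omega)}(M)\le G$, while soundness~(ii) would reduce to the regularity upper bound on such configurations, since a $C^s$-realization with $s>q$ would contradict the Plante--Thurston--Parkhe estimate.

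The main obstacle, I expect, is producing a \emph{uniform} family $\{\Gamma_q\}$ covering every rational $q\ge 1$: the classical Farb--Franks bounds yield sharp jumps only at values of the form $1+1/(k-1)$, so for intermediate rationals one must either stack several nilpotent configurations in parallel or invoke the Kim--Koberda critical-regularity machinery to interpolate. A secondary subtlety is the volume-preserving case, in which the companion diffeomorphisms must themselves be volume preserving; this may force a passage from gradient-like flows to divergence-free flows in the construction of the local models, with a concomitant recalibration of the critical exponents.
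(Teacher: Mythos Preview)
Your proposal has a genuine gap in the soundness direction~(ii). The formula you describe asserts that $x$ together with existentially quantified companions $y_1,\dots,y_n\in G$ realize a group $\Gamma_q$ whose critical regularity is $q$. But the companions are only known to lie in $G\le\Diff^1_{(\omega)}(M)$. The Plante--Thurston and Farb--Franks--Parkhe obstructions you cite say that $\Gamma_q$ does not embed in $\Diff^s$ when \emph{all} generators are $C^s$; they say nothing when only the distinguished generator $g$ is $C^s$ and the others are merely $C^1$. So from $G\models\rho_{M,q}(g)$ you cannot conclude $g\notin\Diff^s(M)$, only that the tuple $(g,y_1,\dots,y_n)$ is not simultaneously $C^s$. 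The ``reduction to a one-dimensional invariant arc'' is also not substantiated: there is no first-order mechanism that, given an arbitrary $g$ with companions, extracts a $g$-invariant smooth arc on which the one-dimensional theory applies, and in the volume-preserving case such arcs typically do not exist at all.

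The paper's argument avoids algebraic obstructions entirely. Using the AGAPE interpretation (second-order arithmetic, points, and definable $C^1$-parametrized balls from Proposition~\ref{prop:smooth-ball}), the formula $\rho_{M,q}(g)$ asserts that there is a $C^1$ chart $u:\bR^{n+1}\to M$ in which $g$ \emph{coincides with} a Harrison homeomorphism for an explicit definable sequence $r_k=(k\log^2(k+2))^{-q/n}$, and is the identity off $u(\bR^{n+1})$. Harrison's theorem then gives both directions directly: such a $g$ exists in $\Diff^q_{0(,\omega)}(M)$ (completeness), and any $g$ of this exact form that is $C^s$ for some $s>q$ would, after composing with the smooth chart, yield a $C^1$ conjugacy of the Harrison map to a $C^s$ diffeomorphism, forcing $\sum r_k^{n/s}<\infty$, a contradiction (soundness). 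The one-dimensional case is handled separately in Section~\ref{s:diffeo} via the $\delta$-fast displacement estimates of~\cite{KK2020crit}, again expressed through parametrized intervals and arithmetic rather than through an auxiliary group $\Gamma_q$. In short, the paper pins down the regularity of $g$ by prescribing its pointwise behaviour inside a definable chart, not by placing $g$ inside a group whose global regularity is constrained.
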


The Main Theorem will then be a straightforward consequence modulo consideration on some exceptional cases. Let us make the extra assumptions that all manifolds are closed and connected with dimension at least two and that $r,s\ge1$. Then indeed, we see from Theorem~\ref{thm:mfd}, combined the hypothesis of Main Theorem, that $M$ and $N$ are diffeomorphic. If $1\le r<s\le\infty $, then let us pick $q\in[r,s)\cap\mathbb{Q}$. Theorem~\ref{thm:reg} will imply that $\rho_{q}(g)$ holds in $G$ for some $g\in G$, while it holds for no $g\in H$; this contradicts the elementary equivalence.

Along the way of proving Theorem~\ref{thm:mfd}, we also observe that the first order theory of sufficiently large manifold homeomorphism groups can detect the homeomorphism types of the manifold.

\begin{thm}\label{thm:homeomorphism type-intro} (Theorem~\ref{thm:homeomorphism type})
    Let $M$ be a connected, closed, smooth manifold,
    and let \[\Diff^\infty_{0(,\vol)}(M)\leq G\leq\Homeo_{(\vol)}(M),\] There is a
    sentence $\phi_{M,0}\in \Th{G}$ such that for all smooth (not necessarily closed
    or even compact) manifolds $N$ and
    \[\Diff^\infty_{0(,\vol)}(N)\leq H\leq\Homeo_{(\vol)}(N),\]
    we have $\phi_{M,0}\in\Th{H}$ if and only if $M$ and $N$ are
    homeomorphic.
\end{thm}

From the first order theory of diffeomorphism groups,
we will later be able to interpret differentiable maps between Euclidean spaces, and hence smooth manifolds embedded in Euclidean spaces. This observation, combined with Theorems~\ref{thm:mfd} and~\ref{thm:homeomorphism type-intro} lets us express the existence of exotic smooth structures on any particular underlying manifold.

\begin{cor}\label{cor:exotic}
For a given closed, connected, smooth
manifold $M$
there exists a sentence $\epsilon_M$ such that
if $ \epsilon_M\in \Th G$ 
for some  group $G$ satisfying
\[\Diff^\infty_{0(,\vol)}(M)\leq G\leq \Diff^1_{(\vol)}(M),\]
then there is a manifold that is homeomorphic to $M$ but not diffeomorphic to $M$.
\end{cor}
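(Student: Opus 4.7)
My plan is to build $\epsilon_M$ by combining Theorem~\ref{thm:mfd} and Proposition~\ref{prop:homeomorphic} with the interpretability machinery announced in the paragraph preceding the statement. The working hypothesis I take for granted is that inside any $G$ with $\Diff^\infty_{0(,\omega)}(M)\le G\le\Diff^1_{(\omega)}(M)$ one can interpret, in the model-theoretic sense and uniformly in finitely many parameters $\bar y\in G^k$, smooth manifolds embedded in Euclidean space together with suitable intermediate diffeomorphism groups on them. In particular, for every first-order group sentence $\sigma$ there should be a formula $\sigma^\sharp(\bar y)$ in the language of $G$ that holds exactly when the interpreted group $H'(\bar y)$ is a model of $\sigma$.

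Granting this, I would set
\[
\epsilon_M \;:=\; \exists\,\bar y\;\bigl(\theta(\bar y)\wedge \phi_{M,0}^\sharp(\bar y)\wedge \neg\,\phi_M^\sharp(\bar y)\bigr),
\]
where $\phi_M$ and $\phi_{M,0}$ are the sentences furnished by Theorem~\ref{thm:mfd} and Proposition~\ref{prop:homeomorphic}, and $\theta(\bar y)$ asserts that $\bar y$ legitimately codes a closed smooth embedded submanifold $N'\subset\mathbb{R}^n$ together with a group $H'$ sandwiched as $\Diff^\infty_{0(,\omega)}(N')\le H'\le\Diff^1_{(\omega)}(N')$. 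The conclusion then follows immediately: if $G\models\epsilon_M$, pick a witness $\bar y$; then $H'(\bar y)\models\phi_{M,0}$, so by Proposition~\ref{prop:homeomorphic} the underlying manifold $N'$ is homeomorphic to $M$, while $H'(\bar y)\not\models\phi_M$ forces, via the contrapositive of Theorem~\ref{thm:mfd}, that $N'$ cannot be $C^1$--diffeomorphic (hence, by Remark~\ref{rem:topology}, not $C^\infty$--diffeomorphic) to $M$. So $N'$ is the required exotic smooth structure on the topological manifold underlying $M$.

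The principal obstacle is the interpretability claim itself, which is where all the technical weight lies. Concretely, one must define, in the first-order language of $G$ and uniformly in parameters, a copy of $\mathbb{R}^n$ with its $C^\infty$ structure, the class of smooth maps between Euclidean spaces, smooth embedded submanifolds of $\mathbb{R}^n$, and finally the abstract group of diffeomorphisms of such a submanifold together with the formula $\theta$ verifying that a tuple of parameters legitimately codes such data. Without this, the translation $\sigma\mapsto\sigma^\sharp$ is not available and $\epsilon_M$ cannot even be written down. The authors defer this construction to later sections of the paper, and my proposal treats it as a black box; once it is in hand, the remainder above is routine logical bookkeeping. Notice also that only one direction of implication is claimed in the corollary, so there is no need to show that every exotic structure on the topological type of $M$ is detected by $\epsilon_M$; it suffices that whatever the interpretation produces be genuine, which is precisely what $\theta$ is designed to guarantee.
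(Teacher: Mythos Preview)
Your proposal is correct and aligns with the paper's own justification, which is simply the paragraph preceding the statement: interpret differentiable maps between Euclidean spaces and hence smooth embedded manifolds, then combine with Theorem~\ref{thm:mfd} and Proposition~\ref{prop:homeomorphic}. Your formula $\epsilon_M=\exists\bar y\,(\theta(\bar y)\wedge\phi_{M,0}^\sharp(\bar y)\wedge\neg\phi_M^\sharp(\bar y))$ is exactly the intended construction, and your deduction---$\phi_{M,0}^\sharp$ forces $N'\approx M$ topologically via Proposition~\ref{prop:homeomorphic}, while $\neg\phi_M^\sharp$ together with part~(\ref{p:mfd1}) of Theorem~\ref{thm:mfd} forces $N'\not\cong M$ smoothly---is precisely the logic the paper has in mind.

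One minor stylistic difference: the paper frames the interpretation as producing \emph{manifolds} (via parametrized $C^\infty$--covers $\underline f'\in\EE^\infty(n,k)$, as in Section~\ref{s:mfd}) rather than their diffeomorphism groups. In that framing one can express ``homeomorphic to $M$'' and ``diffeomorphic to $M$'' directly in second-order arithmetic on the embedded copies, bypassing the relativization $\sigma\mapsto\sigma^\sharp$ through an interpreted group $H'$. Your route through $H'=\Diff^\infty(N')$ is equivalent---once smooth maps are interpretable, so is this group, and $\phi_M,\phi_{M,0}$ are designed to hold in any such sandwich group---but it adds one layer of indirection. Either way the interpretability black box (your $\theta$) is where the work lies, and you are right to flag it as such.
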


As an application of Corollary~\ref{cor:exotic}, there is a sentence which asserts ``there exists 
an exotic $4$--sphere". Verifying the validity of the sentence $\epsilon_M$ in a
diffeomorphism group appears practically impossible; see~\cite{KdlN23}.

Finally, we record the following curiosity.
See Proposition~\ref{prop:compact-noncompact} and paragraphs following it for details.

\begin{cor}\label{cor:compact-noncompact-intro}
There is a sentence $\on{compact}$
such that for all smooth boundaryless manifold $M$ 
the following are equivalent:
\begin{enumerate}
    \item $M$ is compact.
    \item We have a sentence $\on{compact}\in \Th G$ for some
\[\Diff^\infty_0(M)\le G\le\Diff^1(M),\]
or for some $G$ satisfying
\[ \Homeo_0(M)\le G\le\Homeo(M).\]
\item  
We have a sentence $\on{compact}\in \Th G$ for all
\[\Diff^\infty_0(M)\le G\le\Diff^1(M),\]
and for all $G$ satisfying
\[ \Homeo_0(M)\le G\le\Homeo(M).\]
\end{enumerate}
\end{cor}
An analogous result holds for homeomorphisms, measure preserving homeomorphisms and volume-preserving diffeomorphisms.

This paper is structured as follows. In Section~\ref{sec:agape}, we interpret a seemingly much more complicated structure $\AGAPE(M,G)$
within the group structure $G$. In Section~\ref{s:mfd}, we use this former structure to detect whether or not two given smooth manifolds are diffeomorphic to each other. A smooth variation of a lemma by Cheeger and Kister is a crucial ingredient here. By the interpretability, we conclude that the group $G$ can detect the smooth structure of $M$.
In Section~\ref{s:reg}, we also use this structure $\AGAPE(M,G)$ to express that the group $G$ contains a diffeomorphism of a certain precise regularity. Harrison's diffeormorphism furnishes the desired
diffeomorphism. Using the interpretability again, we conclude that $G$ detects the regularity of the ambient diffeomorphism group. 
In the last section, we combine the previous sections and also consider some exceptional cases that need separate attention, finally giving a proof of Main Theorem.

\section{Three interpretability results}\label{sec:agape}
From this point on, we consider the class $\MM$ of pairs $(M,G)$ such that $M$ is a smooth manifold and $G$ is a group such that 
$$\Diff_0^\infty(M)\le G\le \Homeo(M).$$
We also consider the class $\MM_{\vol}$ of pairs $(M,G)$ such that $M$ is a smooth oriented manifold of dimension at least two with a smooth volume form $\omega$, and $G$ satisfies
$$\Diff_{0,\vol}^\infty(M,\omega)\le G\le \Homeo_{\vol}(M,\omega).$$
We may assume that $\omega$ is the Riemannian volume form for some fixed Riemanninan metric on $M$, after a conformal change.

Our strategy for Main Theorem can be roughly divided into two steps. In the first step, we investigate the expressive power of the group language of $G$ for $(M,G)\in\MMv$. Such a language is powerful enough to generate expressions for second order arithmetic (i.e. analysis) and smooth embeddings of (sufficiently many) balls into $M$. We use this in the second step, where we formulate the diffeomorphism types of $M$ and regularities of $G$ in terms of those expressions.

In this section, we focus on the first step. Namely, we will build seemingly complicated and detailed languages (called $\ACT$ and $\AGAPE$) from the group language. The structures corresponding to these new languages are shown to be powerful enough for our purpose of expressing smooth balls. On the other hand, we will ``interpret'' these new structures back to group structures and conclude that the original structure, namely the group itself, is already powerful.
Precise mathematical foundations of this strategy will be laid down in this section. 

This approach is in parallel with our previous paper~\cite{kkdln2025}; however, we reiterate that the end goal of this paper is the interpretation of smooth structures on manifolds, which is much more subtle and goes far beyond the known literature~\cite{Rubin1989, Rubin1996} on the interpretability of $C^0$--structures. This problem implicitly necessitates distinguishing between homeomorphic manifolds which are not diffeomorphic, such as exotic spheres.

\subsection{Preliminaries on differential topology}
Let us write $B(k)$ for the compact ball of radius $k$ centered at the origin in $\bR^n$, where the dimension $n$ is implicit. A \emph{embedded collared $C^k$-ball}, or simply a $C^k$-\emph{collared ball} in $M$ is a $C^k$--embedding 
$$\phi\colon B(1)\longrightarrow M$$
that extends to a $C^k$ embedding of $B(2)$. We call $\phi(B(2))\setminus \phi(B(1))$ a
\emph{collar} of the ball. The image of the embedding $\phi$ is often referred to as a ball as well.
We will say two collared balls
$$\phi_1,\phi_2\colon B(1)\longrightarrow M$$
have \emph{disjoint collars} if there exist extensions of $\phi_1$ and $\phi_2$ to $B(2)$ with disjoint images.
We suppress $C^k$ from the notation when the meaning is clear from
context.

If $n=1$ then collared balls are identical with open intervals whose closures not equal to the entirety of $M$. In this case, if $\{B_1,\ldots,B_k\}$ are embedded collared balls in $M$ with disjoint collars, we say that $B_i$ and $B_{i+1}$ are \emph{adjacent} (in the natural linear or cyclic order on $M$) if there is a path from $B_i$ to $B_{i+1}$ which does not meet the interior of $B_j$ for any $j\neq i,i+1$. We will say that $\{B_1,\ldots,B_k\}$ is \emph{ordered} if $B_i$ is adjacent to $B_{i+1}$ for $1\leq i\leq k-1$. There is an obvious notion of an \emph{order preserving} bijection between collections of balls $\{B_1,\ldots,B_k\}$
and $\{B_1',\ldots,B_k'\}$, and we say these collections are \emph{order isomorphic} if such a bijection exists. It is trivial to see that in dimension one, for $k$ arbitrary and two collections $\{B_1,\ldots,B_k\}$ and $\{B_1',\ldots,B_k'\}$ of collared balls with disjoint collars, there exists a smooth diffeomorphism $\phi$ with $B_i\mapsto B_i'$ for all $i$ if and only
if there is an order preserving bijection between $\{B_1,\ldots,B_k\}$
and $\{B_1',\ldots,B_k'\}$.

\begin{lemma}\label{lem:cr-trans-balls-gen}
Let $M$ be a smooth connected manifold.
\begin{enumerate}
    \item If $\dim M=1$, then $\Diff_0^\infty(M)$ acts transitively on order-isomorphic ordered
    $k$--tuples of collared balls with disjoint collars.
    \item\label{p:transitive}  If $\dim M\geq 2$, 
    and if $(U_1,\ldots,U_k)$ and $(V_1,\ldots,V_k)$ are two $k$--tuples of collared balls with disjoint collars 
    in $M$,
    then there exists some $g\in \Diff_0^\infty(M)$ such that 
    $g(U_i)= V_i$ for all $i$.
    \item The group $\Diff_{0,\vol}^\infty(M)$ acts transitively on collared balls of the same volume.
\end{enumerate}
\end{lemma}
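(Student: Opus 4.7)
The plan is to handle the three assertions separately, using induction on $k$ for part~(2) and Moser's trick for part~(3).

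For part~(1), I would reduce to the cases $M = S^1$ and $M$ an open interval, the only connected smooth $1$--manifolds. A collared ball in dimension one is an interval together with a thickened neighborhood, and an ordered $k$--tuple of collared balls with disjoint collars corresponds to an ordered list of $4k$ endpoints on $M$ (with small adjustments when $M$ has boundary). Given two order-isomorphic such tuples, the induced endpoint matching extends to a piecewise-smooth self-bijection of $M$, and a standard smoothing plus compactly supported vector field integration argument converts this into an element of $\Diff_0^\infty(M)$ realizing the prescribed correspondence.

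For part~(2), I would proceed by induction on $k$. The base case $k = 1$ is the classical smooth disk theorem: any two collared balls in a connected smooth manifold of dimension at least two are related by a compactly supported ambient isotopy, which follows from the path-connectedness of the space of smooth embeddings $B(2) \hookrightarrow M$ combined with the isotopy extension theorem. For the inductive step, I would first apply the inductive hypothesis to $(U_2, \ldots, U_k)$ and $(V_2, \ldots, V_k)$ to obtain $g_0 \in \Diff_0^\infty(M)$ aligning them, and then apply the base case inside the open submanifold $M'$ obtained by removing the closures of $V_2, \ldots, V_k$ together with their collars. Since $\dim M \geq 2$, and since a connected manifold minus finitely many disjoint collared closed balls remains connected, a compactly supported diffeomorphism of $M'$ sends $g_0(U_1)$ to $V_1$. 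Extending by the identity and composing with $g_0$ completes the construction.

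For part~(3), interpreted as transitivity of $\Diff_{0,\omega}^\infty(M)$ on collared balls of equal volume, I would first invoke part~(2) to obtain $g \in \Diff_0^\infty(M)$ with $g(U) = V$, and then correct $g$ by the Moser trick. Consider the linear path of volume forms $\omega_t = (1-t)\omega + t\, g^*\omega$. The equal-volume hypothesis gives
\[
\int_U \omega_t = (1-t)\vol(U) + t\,\vol(V) = \vol(U)
\]
for all $t$, so $\dot\omega_t = g^*\omega - \omega$ has vanishing integral over both $U$ and $M \setminus \overline{U}$. This is precisely the condition that $\dot\omega_t$ represents zero in the relative de Rham cohomology $H^n(M, \partial U)$, which allows me to choose an $(n-1)$--form primitive $\beta_t$ of $\dot\omega_t$ whose pullback to $\partial U$ vanishes. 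The Moser vector field $X_t$ determined by $\iota_{X_t}\omega_t = -\beta_t$ is then tangent to $\partial U$, so its flow $\psi_t$ preserves $U$ setwise, and at time $t = 1$ we have $\psi_1^*(g^*\omega) = \omega$. Hence $g \circ \psi_1$ is the desired volume-preserving diffeomorphism sending $U$ to $V$.

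The main obstacle is part~(3): standard Moser produces a volume-equalizing isotopy on $M$, but extra care is required to ensure this isotopy preserves $U$ setwise. The key technical insight is that the equal-volume hypothesis on $U$ and $V$ translates precisely into the vanishing of $[\dot\omega_t]$ in $H^n(M, \partial U)$; this permits the Moser primitive $\beta_t$ to be chosen with vanishing pullback to $\partial U$, which is what forces the associated flow to remain tangent to $\partial U$.
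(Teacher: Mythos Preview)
Your argument is correct and, for parts~(2) and~(3), takes a more explicit route than the paper. The paper handles~(2) by first pushing both $k$--tuples into a single Euclidean chart and then applying smooth isotopy extension there, whereas you induct on $k$; both rest on the same isotopy extension theorem, just organized differently. For~(3) the paper merely cites Corollary~2.2 of Le~Roux, replacing Oxtoby--Ulam by Moser, while your relative Moser argument via $H^n(M,\partial U)$ is a self-contained substitute and is genuinely more informative. Two small repairs are worth noting. In the inductive step for~(2), the collar $g_0(\phi_1(B(2)))$ of $g_0(U_1)$ is automatically disjoint from each $\overline{V_i}=g_0(\overline{U_i})$ but need not avoid the \emph{collars} of the $V_i$, so take $M'=M\setminus\bigcup_{i\ge2}\overline{V_i}$ rather than removing the extended balls. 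In~(3), your construction produces $g\circ\psi_1\in\Diff^\infty_0(M)\cap\Diff^\infty_\omega(M)$, and one must still observe (via the standard Moser deformation retraction of $\Diff^\infty(M)$ onto $\Diff^\infty_\omega(M)$) that this intersection equals $\Diff^\infty_{0,\omega}(M)$; the global existence of the flow $\psi_t$ and the exactness of $g^*\omega-\omega$ also tacitly assume $M$ is closed, which is the paper's standing context.
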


\begin{proof}
We have argued the case where $\dim M=1$ already. In higher dimension, observe first
that after applying a $C^{\infty}$ diffeomorphism of $M$,
we may reduce to the case where $\{B_1,\ldots,B_k\}$
and $\{B_1',\ldots,B_k'\}$ lie in a single smooth chart of $M$. The lemma then follows
from 
a straightforward application of smooth isotopy extension theorem; see~\cite[Chapter 8]{hirsch2012differential}).
    The proof of the last part is identical to
    Corollary 2.2 in~\cite{LeRoux2014}, replacing the
    Oxtoby--Ulam Theorem 
by Moser's Theorem 
and using the previous parts
    for transitivity on smooth balls; cf.~\cite{kkdln2025}.
\end{proof}

The $C^0$--version of the following is Lemma 2.5 in~\cite{kkdln2025}. The proof for
smooth manifolds is more intuitive.

\begin{lemma}\label{lem:trans-measure}
If $U\subseteq M$ is a connected open set of volume $A>0$, then for all $0<a<A$ there exists a (smooth) collared ball $B$ of volume $a$, embedded in $U$. Moreover, if $\dim M
\geq 2$, then we may
arrange for $U\setminus B$ to be connected.
\end{lemma}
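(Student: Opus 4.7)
The plan is to first establish the conclusion inside a precompact open subset of $U$ diffeomorphic to $\bR^n$, and then reduce the general case by producing a large such open set via a spanning-tree argument on a smooth triangulation. The moreover clause then follows from the construction, since $B$ is automatically placed with its collar inside an open ball of $U$, inside which the complement of $B$ is path-connected when $\dim M \ge 2$.

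For the open-ball step, suppose $V \subseteq U$ is open and diffeomorphic to $\bR^n$ via $\phi\colon\bR^n \to V$ with $\vol(V) > a$. The function $r \mapsto \int_{B(r)}\phi^*\omega$ is continuous and nondecreasing from $[0,\infty)$ onto $[0,\vol(V))$ and attains the value $a$ at some $r_0 > 0$. The rescaled map $\psi(x) := \phi(r_0 x)$ is then a $C^\infty$ embedding of $B(1)$ into $V$ extending to $B(2)$, so $\psi(B(1))$ is a collared ball of volume exactly $a$ whose collar extension $\psi(B(2))$ also lies in $V$.

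For the reduction, I would show that for every $\epsilon > 0$ there exists a precompact open $V \subseteq U$ diffeomorphic to $\bR^n$ with $\vol(V) > A - \epsilon$. By inner regularity, pick a compact $K \subseteq U$ with $\vol(K) > A - \epsilon/3$. Fix a smooth triangulation of $M$ fine enough that the union $W$ of the top-dimensional simplices meeting $K$ is contained in $U$; then $\vol(W) \ge \vol(K)$. Since $U$ is open and connected, join the finitely many components of $W$ by thin smooth tubes inside $U$ to obtain a connected compact codimension-$0$ smooth submanifold $W' \subseteq U$ with $\vol(W') > A - \epsilon/2$. Form the dual graph $\Gamma$ of a smooth triangulation of $W'$, whose vertices are the top-dimensional simplices and whose edges are the interior $(n-1)$-faces, and pick a spanning tree $T \subseteq \Gamma$. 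Remove a thin tubular neighborhood of each non-tree interior $(n-1)$-face from $W'$; the resulting compact subset $D \subseteq W'$ is obtained by gluing (trimmed) $n$-simplices along the $(n-1)$-faces of $T$ alone, so, after corner smoothing, $D$ is diffeomorphic to a closed $n$-ball, since gluing smooth $n$-balls along smooth $(n-1)$-disks in a tree pattern always yields a smooth $n$-ball. Choosing the tubular neighborhoods thin enough makes $\vol(W' \setminus D) < \epsilon/2$, so $V := D^\circ$ is the required open ball.

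For the moreover clause, assume $n = \dim M \ge 2$. With $B$ and $V$ as above, $V \setminus B$ is diffeomorphic to $\bR^n$ minus a Euclidean ball and so is path-connected. For any $p \in U \setminus B$, connectedness of $U$ yields a path from $p$ to a point of $V \setminus B$; this path avoids $B$ automatically outside $V$ (since $B \subseteq V$), and inside $V$ it can be rerouted through the path-connected set $V \setminus B$. Hence $U \setminus B$ is path-connected. The main technical obstacle is the spanning-tree step: one must verify that iteratively gluing smooth $n$-simplices along codimension-one faces in a tree pattern, combined with the corner smoothing needed after removing thin slabs at non-tree faces, really produces a smoothly embedded closed $n$-ball and not merely a PL one.
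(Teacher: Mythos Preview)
Your argument is essentially correct and shares the same skeleton as the paper's proof: triangulate a large-volume compact piece of $U$, use a spanning-tree construction to produce a ball, and then obtain the exact volume $a$. The organizational choice of first producing a large open $V\cong\bR^n$ and then applying the intermediate value theorem to $r\mapsto\int_{B(r)}\phi^*\omega$ is clean and not made explicit in the paper, which instead builds the ball of volume approximately $a$ directly.

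The one substantive difference is in the spanning-tree step, and it is exactly the point you flag. The paper avoids the corner-smoothing and lower-skeleton issues entirely: rather than removing thin slabs along non-tree $(n-1)$-faces and then having to worry about what happens near the $(n-2)$-skeleton, it first \emph{shrinks} each top-dimensional simplex to a smooth ball lying strictly in its interior, obtaining a pairwise \emph{disjoint} family of balls, and then connects them by thin smooth tubes along a spanning tree of arcs in $U$. A tree of disjoint smooth $n$-balls joined by thickened arcs is manifestly diffeomorphic to a smooth $n$-ball, with no corner-smoothing needed. Your slab-removal approach can be made to work, but as stated it leaves the $(n-2)$-skeleton intact, and around a vertex shared by both tree and non-tree faces the local picture is not obviously a ball; the clean fix is to also excise a thin neighborhood of the entire $(n-2)$-skeleton, at which point you have recovered precisely the paper's ``disjoint balls plus tubes'' picture.

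Your argument for the moreover clause is correct and in fact slightly more detailed than the paper's, which simply asserts that in dimension at least two a collared ball compactly contained in $U$ cannot separate it; your rerouting argument through the connected set $V\setminus B$ is a valid justification of that assertion.
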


\begin{proof}
Since a smooth volume form defines a Radon measure, we can find a finite union $V$ of smooth balls with disjoint collars in $U$ such that $\vol(V)>a$. We inductively choose disjoint smooth arcs $\gamma_1,\ldots,\gamma_{k}$ with endpoints in $V$ so that the following hold for each $i$: (i) the complement of $X_{i}=V\cup\gamma_1\cup\cdots\gamma_i$ stays connected; (ii) each connected component of $X_i$ is simply connected; (iii) when $i<k$, the set $X_{i+1}$ has one less connected component than $X_i$. Using the tubular neighborhood theorem to each ball, and also to $\gamma_i$ chosen to transversely intersect the boundary of each ball, we obtain a smoothly embedded tube $T_i$ around $\gamma_i$ so that the resulting union $X=V\cup\bigcup_i T_i $ is a piecewise smooth embedding of a ball with volume larger than $a$. As a codimension-0 submanifold, It follows that $X$ is arbitrarily $C^0$-close to a smoothly embedded ball $Y$; see~\cite{Hirsch1962} and~\cite[Theorem 4.16]{FNOP2024}. By requiring $\vol(Y)>a$, and taking a suitable smaller concentric ball if necessary, we obtain a desired (smoothly embedded) collared ball of the exact volume.
\end{proof}

\subsection{Action structures of locally moving groups}\label{ss:act}
Let us briefly review key concepts from model theory that will be used in this paper. The reader may refer to \cite[Section 2.3]{kkdln2025}; more detailed accounts can be found in~\cite{marker-book,tent-ziegler}, for example.

Briefly speaking, the model theory deals with the \emph{sentences} of a \emph{language} for a mathematical \emph{structure}. The precise definitions of the terms in the case of groups go as follows. Let $G$ be a group. The group language consists of finite sequences of \emph{logical symbols} ($\wedge, \vee, \to, \neg, =$ and names $x_0,x_1,\ldots$ for variables)
and the group \emph{signature} (the identity symbol $1$, the inverse symbol ${}^{-1}$ and the composition symbol $\circ$, which will be used as $0$-, $1$- and $2$-ary operations). A (well-formed) \emph{formula} is a finite sequence that logically makes sense but is not necessarily true, such as $x_0=x_1^2$ or $x_1\ne x_1$. A \emph{sentence} is a formula without unquantified variables; by design, a sentence is either true or false in a specific structure, namely $G$. The \emph{universe} of the group structure $G$ is the set $G$ itself. The \emph{theory} $\Th{G}$ of a group is the set of all sentences that are valid in $G$. 
When $\phi\in\Th{G}$, we write $G\models \phi$ or simply, $\models\phi$ as long as the meaning is clear.
The \emph{universe} of a specific structure $G$ is the set $G$ itself.

Let us now expand the group language to the \emph{action} language. 
It will be convenient for us to consider a more general class of group actions. Let $X$ be a topological space. A subgroup $G$ of $\Homeo(X)$ is \emph{locally moving} if for each nonempty open set $U\subseteq X$ there exists a nontrivial $g\in G$ such that $g\restriction_{X\setminus U}=\Id$.
Let $\CLM$ denote the class of pairs $(X,G)$ where $X$ is a Hausdorff space and $G\le\Homeo(X)$ is locally moving.

Let $(X,G)\in\CLM$. 
For a subset $U\subseteq X$, we denote by $\inte U$ and $\cl U$ the interior and the closure of $U$, respectively.
Recall $U$ is \emph{regular open} if $U = \inte \cl U$. We let $\RO(X)$ be the collection of all the regular open subsets. 
The universe of the action structure $\ACT(X,G)$ consists of two \emph{sorts}; the first is $G$ and the second is $X$. 
In the \emph{action language}, we use the same logical symbols as the group language, except that different variables are reserved for different sorts; for instance, we will often use $g_0,g_1,\ldots$ for group elements and $U_1,\ldots$ for regular open sets, although we will often use even simpler notation.
The \emph{action signature} contains the group signature along with the unary binary operations $\supp^e$ and $\appl$, which will respectively mean
$$\supp^e g:=\inte \cl (M\setminus \fix g), \quad \appl(g,U)=g(U).$$
Again for brevity, we prefer to use the notation $g(U)$ instead of $\appl(g,U)$. Additionally, we includes the Boolean operation symbols such as
$$U\cap V, U\oplus V:=\inte\cl (U\cup V), U^\perp:=\inte\cl (X\setminus U), \varnothing, \mathbb{X}:=X.$$

The action structure $\ACT(M,G)$ is suitable to deal with many crucial topological properties of manifolds. Let us begin with the following basic observations. The proofs are given in~\cite{kkdln2025}, and the difference of the categories ($C^0$ vs $C^\infty$) causes no subtleties here.

\begin{lemma}\label{lem:top-prop}
There exist action formulae which express the following
for all $(M,G)\in\MMv$.
    \begin{enumerate}
\item\label{p:conn}
Connectedness of a regular open set $U$.
\item\label{p:cc}
    	That a regular open set $U$ is a connected component of a regular open set
     $V$, i.e.~$U\in\pi_0(V)$.
\item\label{p:ucc}
    	That a regular open set $U$ is a union of connected components of a
     regular open set $V$.
\item\label{p:cck}
For all natural numbers $k$, that a regular open set $U$ has exactly $k$ components, i.e $\#\pi_0(U)=k$.
\item\label{p:disj} That regular open sets $U$ and $V$ satisfy $U\oplus V=U\sqcup V$;
here $\oplus$ denotes the Boolean join and $\sqcup$ denotes disjoint union.
\item\label{p:ccpartition}
That a regular open set $W$ is a disjoint union of regular open set $U$ and $V$, each
of which consists of connected components of $W$.
\item\label{p:cnteq}
That whenever at least one of two regular open sets $U$ and $V$ has finitely many components, $U$ and $V$
have the same number of components.
\end{enumerate}
\end{lemma}

The action structure $\ACT(M,G)$ is seemingly more powerful than the group structure $G$. However, under certain hypotheses these two structures turn out to be ``equally rich'' in the following sense.
Interested readers can refer to \cite{kkdln2025, Rubin1989} for the general definition of \emph{uniform interpretability}, although we will explain its meaning in the current circumstance. 

\begin{thm}[{Rubin's Expressibility Theorem, {\cite[Theorem 2.5]{Rubin1996}}}]\label{thm:rubin}
Let $\CLM'\subseteq\CLM$ be a subclass such that for each $(X,G)\in\CLM'$ and for each $U\in\RO(X)$, there exists some $g\in G$ such that $U=\suppe g$.
Then the action structure $\ACT(X,G)$ is interpretable in $G$ uniformly for $(X,G)\in\CLM'$.
This means, we have a group formula $\phi_{\mathrm{eq}}$ and a map $\varphi\mapsto \varphi^*$ from the action language to the group language such that the following hold for each $(X,G)\in\CLM'$.
\begin{itemize}
    \item 
    For all $g,h\in G$ we have
    $$
     \suppe g = \suppe h \Longleftrightarrow \phi_{\mathrm{eq}}(g,h).
    $$
    \item For each action formula $\varphi(g_1,g_2,\ldots,U_1,U_2,\ldots)$, the set 
    $$
    \{
    (g_1,g_2,\ldots,h_1,h_2,\ldots)\mid
    \varphi(g_1,g_2,\ldots,\suppe h_1,\suppe h_2,\ldots)\}
    $$ coincides with the set of tuples satisfying the group formula $\varphi^*$.
    \end{itemize}
\end{thm}
An explicit form of the formula $\phi_{\mathrm{eq}}$ can be found in~\cite[Theorem 3.4.3]{KK2021book}. 
As a consequence, we have the following; see also~\cite[Lemma 2.11]{kkdln2025}.
\begin{cor}[{\cite[Corollary 1.19]{Rubin1996}}]\label{c:l2act}
Whenever $(M_i,G_i)\in \MMv$ for $i=1,2$ satisfy $G_1\equiv G_2$,  we have that $\ACT(M_1,G_1)\equiv\ACT(M_2,G_2)$
\end{cor}
\begin{proof}
In view of Theorem~\ref{thm:rubin}, it only remains to prove for each $(M,G)\in\MMv$ and for each regular open set $U$
we can find $g\in \Diff^\infty_{0,\vol}(M)$ such that $\suppe g = U$.
The argument is similar to that of~\cite[Propsition 2.7]{kkdln2025}, with the difference being $g$ is required to be smooth.
That is, writing a dense open subset of $U$ as a countable union of disjoint balls, one produces elements of $G$ supported
on each of these balls separately. To guarantee that their composition is an element of
$G$, one need only arrange that the derivatives on each component decay to the identity
sufficiently quickly, which can be achieved through standard techniques. 

We conclude that $U$ coincides with the extended support of the resulting diffeomorphism.
\end{proof}

As an immediate consequence, as well as a showcase of the expressive power of an action structure, let us now establish that the action structure (and hence, the group structure) detects the connectivity of the ambient manifold.
\begin{prop}\label{prop:cnt}
There exists an action sentence $\phi=\phi^{(\vol)}_{\mathrm{cnt}}$ such that for all $(M,G)\in\MMv$, the manifold $M$ is connected iff $G\models\phi$. 
\end{prop}
\begin{proof}
We let a sentence $\phi$ express the following: 
for all nonempty disjoint $U,V\in\mathrm{RO}(M)$, 
there exist nonempty disjoint regular open sets $U',U''$ in $U$ and $g\in G$ such that $g(U')\subseteq V$ and $g(U'')=U''$. It is clear that $\phi$ holds if and only
if $M$ is connected; indeed, if $N$ is connected then we can apply
Lemma~\ref{lem:cr-trans-balls-gen}, and if $M$ is not connected then we choose $U$ and $V$ to be distinct connected components.
\end{proof}

Although we do not have an expression that $\vol(U)\le \vol(V)$, we do have a relevant, slightly more technical expression.
\begin{lemma}\label{lem:vol-tech}
There exists an action formula $\vol_{\leq}(u_1,u_2,v)$ such that 
for all $(M,G)\in\MM_{\vol}$ and for all  $(U_1,U_2,V)$ of regular open sets with $U_1$ and $U_2$ connected  and $U_1,U_2\subseteq V$ with $\cl V\sse \inte M$, we have the following:
    \begin{enumerate}
    	\item
    	If $\omega(U_1)\leq \omega(U_2)$ then $\vol_{\leq}(U_1,U_2,V)$ holds.
    	\item
    	If $\vol_{\leq}(U_1,U_2,V)$ holds then
     $\omega(U_1)\leq\omega((\cl U_2)\cap V)$.
    \end{enumerate}
\end{lemma}

Lemma~\ref{lem:vol-tech} has a continuous analogue in~\cite[Lemma 3.9]{kkdln2025},
and the proof is identical after employing Lemma~\ref{lem:trans-measure} and Lemma~\ref{lem:cr-trans-balls-gen} for the current (smooth) case. This yields one of the most crucial expressions of being ``compactly contained in a ball''. The logic of the argument is identical, employing the previous lemma:
\begin{lemma}[cf.~\cite{kkdln2025}, Lemma 3.10]\label{lem:compact-contained}
 For $(M,G)\in\MMv$ and for $U,V\in\RO(M)$, 
 there exists an action formula $\phi_{\mathrm{cc}}^{(\vol)}(U,V)$ expressing that $U$ is compactly contained in an open ball that is contained in $V$;
that is, there exists a smooth collared ball $B\subseteq V$ such that $U\subseteq\inte B$.
\end{lemma}

We can now detect a manifold being closed by an infinite disjuction of sentences.

\begin{prop}\label{prop:closed}
There exists an action sentence $\phi_n=\phi^{(\vol)}_{\mathrm{closed},n}$ for $n\in\N$ such that for all $(M,G)\in\MMv$, the manifold $M$ is closed iff $G\models\phi_n$ for some $n$. 
\end{prop}
\begin{proof}
Again, it suffices to find an $\ACT$ sentence expressing the compactness.
We let a sentence $\phi$ express that the ambient manifold coincides with
\[
\bigoplus_{i=1}^n U_i\]
for some finite list $\{U_1,\ldots,U_n\}$ of regular open sets such that each $U_i$ is compactly contained in some open collared ball of the manifold; this is possible by Lemma~\ref{lem:compact-contained}. 
Then for $(M,G)\in\MMv$, the manifold $M$ is closed if and only if $H\models \phi_n$ for some $n\in\omega$. 
\end{proof}
Note that, \emph{a priori}, such an $n$ will depend on $M$; we will later remove this restriction and make the sentence universal by interpreting arithmetic, under certain hypotheses (Proposition~\ref{prop:compact-noncompact}).

\begin{prop}\label{prop:dim1}
There is an action sentence expressing that $M$ has dimension at least two for each $(M,G)\in\MMv$.
\end{prop}
\begin{proof}
We simply let a sentence express that for some nonempty regular open sets $U_i\subseteq V_i$ for $i=1,2,3$ such that
$\phi_{\mathrm{cc}}^{(\vol)}(U_i,V_i)$ and
such that $\{V_1,V_2,V_3\}$ is a pairwise disjoint collection, the set $(U_1\oplus U_2\oplus U_3)^\perp$ should be connected.
\end{proof}

\subsection{On AGAPE structures}\label{ss:agape}
We can now extend an action structure to a larger structure, called AGAPE.
The process of extension is based on the ideas and terminology of \cite{kkdln2025}, but we tailor the definition for the proof of Main Theorem. 

\emph{We mostly consider the class $\MMc$ of all pairs $(M,G)\in\MMv$ such that $M$ is closed and connected, unless specified otherwise.} 
In view of Main Theorem, our restriction to $\MMc$ will be sufficient by Propositions~\ref{prop:cnt}, ~\ref{prop:closed} and~\ref{prop:dim1}.

The universe of the new structure $\AGAPE(M,G)$ for $(M,G)\in\MMc$ is the disjoint union of the following sets:
$$
G, \RO(M), M, M^{\mathrm{fin}}, \mathbb{R},2^{\mathbb{N}},\{C(\mathbb{R}^k,\mathbb{R}^\ell)\}_{k,\ell\in\omega}.
$$
Here, we let $M^{\mathrm{fin}}$ denote the set of all finite subsets of $M$.
We also include $\mathbb{N}$ as a distinguished (definable by a formula) subset of $\mathbb{R}$.
The signature of $\AGAPE$ consists of the following symbols.
\begin{enumerate}[(i)]
\item[(AGAPE1)] The signature of $\ACT$.
\item[(AGAPE2)] The signature of the second order arithmetic for $2^{\mathbb{N}}$, $\mathbb{R}$ and $C(\mathbb{R}^k,\mathbb{R}^\ell)$:
$$
(0,1,<,+,\times,\in,\subseteq,\|\cdot\|).
$$
\item[(AGAPE3)] The signature dealing with the group actions on the points and subsets of $M$:
$$
(\appl,\#\pi_0, \in,,\cup, \in\on{cl},\exp)
$$
\end{enumerate}
The meanings of the new nonlogical symbols are the standard ones. For instance, the symbol $\|f\|$ means the uniform norm of a continuous map $f\in C(\mathbb{R}^k,\mathbb{R}^\ell)$.
We have that $p\in\cl U$ iff $p$ does not belong to $U^\perp$.
We define $\appl(g,X)=g(X)$ when $g\in G$
and $X$ is either a point or a subset of $M$. 
The expression $k=\#\pi_0(X)$ means $X$ has exactly $k$ connected components.
The set theoretic symbols in [AGAPE2] denote membership relations in $\mathbb{N},\mathbb{R}$ and $2^{\mathbb{N}}$. 
Such symbols are also used for points and subsets of $M$ in [AGAPE2], and this will not cause confusion for us.
We let $\exp(g,k,p)=q$ for $g\in G$ and $k\in\mathbb{N}$ iff $g^k(p)=q$.

The AGAPE structure will be expressive enough for us, leading to detection of manifolds and regularities. In this subsection, we establish that the extended structure  $\AGAPE(M,G)$ is still uniformly interpretable to $\ACT(M,G)$, and hence to $G$.
Our proof is based on the idea of \cite{kkdln2025}, only sketching the steps that are completely identical. We emphasize again that in this latter paper, each group is assumed to contain $\Homeo_{0(,\vol)}(M)$ although in the current case we only have $\Diff_{0(,\vol)}^\infty$, and hence, special considerations need to be given for existential statements.

\subsubsection{Interpreting second order arithmetic}
The interpretation of second order arithmetic can be carried out in a similar fashion to~\cite[Section 4]{kkdln2025}.
Namely, we will interpret $\bN$ as the collection of regular open sets with finitely many connected components.
The first step of such an interpretation would be detecting whether or not a given regular open set has finitely many connected components.
In~\cite{kkdln2025}, we found a formula which essentially asserted that a regular set $W$ has infinitely many components if there is a subset $U$ of $W$ with a component-wise injection between components of $W$ and $U$, and a homeomorphism which permutes components of $W$ in a translation-like fashion. Such a formula is not available in the smooth case, since one may not be able to find a diffeomorphism with the desired properties. Some modifications are thus necessary.

\begin{lem}[{Compare with~\cite[Section 3.4.2]{kkdln2025}}]\label{lem:infcomp}
There exists an action formula $\on{infcomp}(w)$ such that whenever $(M,G)\in\MMc$ we have $\on{infcomp}(W)$ for $W\in \RO(M)$ iff $W$ has infinitely many connected components.
\end{lem}

To explain the intuition behind Lemma~\ref{lem:infcomp}, we pass to a subset
$U\subseteq W$ which meets every component of $W$ in at most one component; moreover,
there will be another regular open $V$ ``interlaced" with $U$. One can imagine that
$U$ and $V$ are linearly arranged, with alternating components
\[\{U_0,V_0,U_1,V_1,U_2,V_2,\ldots\}.\]
We then insist that there be an element of the ambient
group $G$ which sends some point of $U_i$ into a unique component $V_i$ of $V$, and sends
a point of $V_i$ into a unique component $U_{i+1}$ of $U$. This argument is a slightly
modified version of an argument given in~\cite{kkdln2025}, with extra care given to
the existence of a diffeomorphism with the desired properties (and not just a
homeomorphism). Note that there is a formula which expresses that a regular open set $U$
is \emph{dispersed}, i.e.~every connected component $\hat U$ of $U$ is compactly contained in an open
ball inside of $U^{\perp}\oplus \hat U$.

\begin{proof}[Proof of Lemma~\ref{lem:infcomp}]
    The formula $\on{infcomp}(w)$ will express the conjunction of the following
    clearly expressible statements:
    
    There exist regular open sets $U,V\in\RO(M)$, a unique component
    $U'\in\pi_0(U)$, and an element $g\in G$ such
    that:
    \begin{enumerate}
        \item $\varnothing\neq U\subseteq W$.
        \item Every component of $W$ contains at most one component of $U$.
        \item $U$ and $V$ are dispersed and disjoint.
        \item For all components $\hat U$ of $U$, there is a unique component
        $\hat V$ of $V$ such that $g(\hat U)\cap \hat V\neq \varnothing$,
        and for all components $\hat U\in\pi_0(U)\setminus U'$, there is a
        unique component $\hat V$ of $V$ such that $g^{-1}(\hat U)\cap\hat V\neq
        \varnothing$.
        \item For all components $\hat V$ of $V$, there is a unique component
        $\hat U\in\pi_0(U)\setminus U'$ such that
        $g(\hat V)\cap\hat U\neq\varnothing$, and for all components $\hat V$ of
        $V$ there is a unique component $\hat U$ of $U$ such that
        $g^{-1}(\hat V)\cap\hat U\neq\varnothing$.
        \item $g^{-1}(U')\cap V=\varnothing$.
    \end{enumerate}

    For the forward direction, suppose that $W$ satisfies $\on{infcomp}(W)$.
    It clearly suffices to prove that $U$ has infinitely many components.
    We let $U'=U_0$. We set $V_0$ to be the unique component of $V$ such that
    $g(U')\cap V_0\neq\varnothing$.

    Having defined $U_i$ and $V_i$, assume by induction that \[\{U_0,\ldots,U_i\}
    \quad
    \textrm{and}\quad \{V_0,\ldots,V_i\}\] are all distinct.
    We set $U_{i+1}$ to be the unique component of
    $U$ which satisfies \[g(V_i)\cap U_{i+1}\neq\varnothing.\] Note that
    \[U_{i+1}\notin\{U_0,\ldots,U_i\}.\] Indeed, suppose $U_{i+1}=U_j$ for some
    $j\leq i$. It is immediate that $j\neq 0$ since $g^{-1}(U_0)\cap V=\varnothing$.
    So, we may conclude that $g^{-1}(U_j)$ meets two distinct components of $V$,
    a contradiction. Similarly, we set $V_{i+1}$ to be the unique component of
    $V$ such that $g(U_{i+1})\cap V_{i+1}\neq\varnothing$. It is immediate that
    $V_{i+1}\notin \{V_0,\ldots,V_i\}$ since otherwise $g^{-1}(V_{i+1})$ would
    meet two distinct components of $U$, a contradiction. It follows that $U$,
    and hence $W$, both have infinitely many components.

    Conversely, suppose that $W$ has infinitely many components. 
    By the compactness of $M$,
    we may choose
    a convergent sequence of points $\{x_i\}_{i\in\N}\subseteq W$,
    each term lying in a different component of $W$. Passing to a further
    subsequence if necessary, we may assume that this sequence lies in a single
    Euclidean chart diffeomorphic to $\R^n$ (or to the upper half-space, in the case
    of a manifold with boundary), and that the sequence converges to
    the origin $0$. Passing to an even further subsequence if necessary,
    we may assume
    $d(x_{i+1},0)\leq \frac{1}{2}d(x_{i},0)$ for all $i$. We set $U$ to be a union
    of neighborhoods $\{U_i\}_{i\in\N}$ of the points
    $\{x_i\}_{i\in\N}$, where we take the radius
    of the neighborhood $U_i$ about $x_i$ to be $\frac{1}{4}d(x_i,0)$. Choose
    smooth paths $\{\gamma_i\}_{i\in\N}$ connecting $U_i$ to $U_{i+1}$ such that
    for all $y\in\gamma_i$, we have \[\frac{5}{4}d(x_{i+1},0)\leq d(y,0)\leq
    \frac{3}{4}d(x_i,0),\] and let $\tilde V$ be a small tubular neighborhood of
    the union of these paths. We shrink $\tilde V$ slightly to obtain the set $V$.
    Since we have freedom in shrinking $V$, it is clear that we may arrange
    the existence of a $C^{\infty}$ diffeomorphism $g$
    with the desired properties; moreover,
    we may arrange for $g$ to preserve the standard volume form on $\R^n$ in the
    volume-preserving case, provided $n\geq 2$.
\end{proof}

Evidently, there is formula $\on{fincomp}(w)$ which is simply the negation of
$\on{infcomp}(w)$, and expresses that a regular open set has only finitely many
components. Now, the interpretation of the remaining sorts in the second order arithmetic is precisely done in the manner of 
\cite[Section 4]{kkdln2025}.

We thus obtain a uniform interpretation of first order analysis within the first order
theory of $G$. 
\begin{prop}\label{prop:arith}
The second order arithmetic is interpretable in $\ACT(M,G)$ uniformly over $(M,G)\in\MMc$.
\end{prop}

\subsubsection{Points, exponentiation, and covering}
It remains to interpret the [AGAPE3] sorts as well as relevant operations described at the beginning of this subsection.
The interpretation of finite tuples of points is similar to~\cite[Section 4]{kkdln2025}, though 
the argument here is simpler as we only consider $(M,G)\in\MMc$, namely $M$ is closed.
Specifically, a point $p\in M$ is coded by a regular open set $U$ as follows:
\begin{enumerate}
    \item $U$ has infinitely many components.
    \item For all components $\hat U$ of $U$, the complement $\hat U^{\perp}$ consists of
    exactly two components, say $V_1^{\hat U}$ and $V_2^{\hat U}$;
    exactly one of these components, say $V_1^{\hat U}$, contains finitely many components
    of $U$, and $V_2^{\hat U}$ contains infinitely many components of $U$. Moreover, 
    $V_2^{\hat U}$ is compactly
    contained in $V_2^{\hat U}\oplus \hat U$.
    \item For all pairs $W_1,W_2\in\RO(M)$ of regular open sets such that $W_i$ is compactly
    contained in $W_j^{\perp}$ for $i\neq j$, we have all but finitely many components of $U$
    are contained in $W_i^{\perp}$ for at least one $i\in\{1,2\}$.
\end{enumerate}

The preceding conditions are clearly first order expressible. Notice furthermore that
an easy compactness argument shows that
\[\bigcap_{\hat U\in\pi_0(U)}V_2^{\hat U}\neq\varnothing.\] The separating condition
involving
the sets $W_1$ and $W_2$ shows that this intersection actually consists of exactly one point
$p$, the point coded by $U$.

We declare two sets $U_1$ and $U_2$ to be equivalent if for all $\hat U_1\in\pi_0(U_1)$,
infinitely many components of $U_2$ are contained in $V_2^{\hat U_1}$. Clearly this condition
will hold if and only if the points coded by $U_1$ and $U_2$ are equal, and it is
straightforward to see that this condition is actually symmetric and defines a (first order
definable) equivalence relation. It is straightforward also to generalize this condition to
code finite tuples of points in $M$ by regular open sets, and to interpret the exponentiation function $\exp$.
\begin{prop}\label{prop:points-exp}
The following are interpretable in $\ACT(M,G)$ uniformly over $(M,G)\in\MMc$:
\begin{enumerate}
\item The set $M^{<\omega}$ of finite tuples of points in $M$;
\item The exponentiation relation $g^k(p)=q$ for $p,q\in M$ and $k\in \bN$.
\end{enumerate}
\end{prop}

From the interpretation of points, it is straightforward to define a predicate expressing that
a point lies in the closure of a regular open set $U$, since $p\in \cl U$ iff $p\not\in U^\perp$.
We now conclude:
\begin{prop}\label{p:agape-ext}
Uniformly for $(M,G)\in\MMc$, the structure $\AGAPE(M,G)$ is interpretable to $\ACT(M,G)$.
\end{prop}

Henceforth, formulae and sentences will implicitly be in the signature of the structure AGAPE as discussed at the beginning of this section. We emphasize that since this is a conservative expansion of the language of group theory, all formulae and sentences can be reduced to the language of group theory.

\subsection{On (certain definable) collared topological balls}\label{ss:balls}
One of the most crucial logical steps for Main Theorem is an interpretation of certain collared balls, which are abound enough to generate the topology of $M$. To begin with, let us note it is expressible in $\AGAPE$ language that some finite collection
$\{U_1,\ldots,U_n\}$ of regular open sets cover the closure of another regular open set $W$.
We can interpret enough about Lebesgue covering dimension to prove the following:

\begin{prop}\label{prop:dimension}
For all $n\in\N$ there is a sentence $\dim_n$ such that $G\models\dim_n$ if and only if the underlying manifold $M$ has dimension equal to $n$.
\end{prop}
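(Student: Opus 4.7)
The plan is to interpret Lebesgue covering dimension in the first order theory of $G$, and then to invoke the classical fact from dimension theory that a topological $n$-manifold has covering dimension exactly $n$. All required ingredients are already available from the preceding subsections: quantification over regular open sets, the component-counting predicates from Lemma~\ref{lem:top-prop}, the uniform interpretation of points in $M$ (Proposition~\ref{prop:points}), the predicate expressing that a point lies in the closure of a regular open set, and arithmetic (Proposition~\ref{prop:arith}).

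First I would construct an auxiliary membership formula $\mathrm{in}(p,U)$ expressing $p\in U$ for $p\in M$ and $U\in\ro(M)$: since $U$ is regular open, this is equivalent to $p$ lying in $\cl(U)$ but not in $\cl(U^{\perp})$, both of which are expressible via the point interpretation and the closure predicate. Using the component-enumeration formulae of Lemma~\ref{lem:top-prop} together with arithmetic, I would then write a formula $\mu(V,p,k)$ expressing that $p$ lies in at most $k$ distinct components of $V$, whenever $V$ has finitely many components and $k\in\N$. With these auxiliaries in place, the sentence $\dim_{\le n}$ asserts: for every regular open $U$ with finitely many components such that $U$ is the top element of $\ro(M)$, there exists a regular open $V$ with finitely many components such that $V$ is also the top element of $\ro(M)$, each component of $V$ is contained in some component of $U$, and $\mu(V,p,n+1)$ holds for every point $p\in M$. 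Finally, I set $\dim_n := \dim_{\le n}\wedge\neg\dim_{\le n-1}$, with the convention $\dim_{\le -1}:=\bot$.

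Validity then follows from two standard ingredients: (i) on a topological $n$-manifold the Lebesgue covering dimension equals $n$; and (ii) the Lebesgue invariant is unchanged by restricting to finite covers by regular open sets, since every finite open cover of a manifold admits a regular open refinement (for instance, by collared balls, which are available in abundance thanks to Lemma~\ref{lem:cr-trans-balls-gen}). The main subtlety I anticipate in executing this plan is precisely this last point: one must verify that restricting to \emph{regular} open covers in the definition of covering dimension does not alter the invariant, but this is addressed by the standard refinement-by-balls argument, so it should present no real obstacle.
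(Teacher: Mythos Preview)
Your overall strategy---interpret Lebesgue covering dimension and invoke the classical fact that an $n$-manifold has covering dimension $n$---is exactly the paper's strategy. However, your encoding of ``finite open cover'' is broken, and this breaks the argument.

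You propose to represent a cover by a single regular open set $U$ whose connected components are the cover elements, and likewise for the refinement $V$. But the components of any open set are pairwise disjoint. Two consequences follow. First, if $M$ is connected and $U$ is required to be the top element of $\ro(M)$, then $U=M$ has exactly one component, so your ``covers'' are trivial. Second, and more fundamentally, since the components of $V$ are disjoint, every point of $M$ lies in at most one of them; your multiplicity formula $\mu(V,p,n+1)$ is therefore satisfied vacuously for every $n\ge 0$, and $\dim_{\le n}$ as you wrote it is true for every $n$. Covering dimension is precisely a measure of the overlap that is forced among the members of a refinement, so an encoding that makes the cover elements disjoint by construction cannot detect it.

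The repair is to encode a finite open cover as a finite \emph{tuple} $(U_1,\ldots,U_k)$ of possibly overlapping regular open sets, with $k$ allowed to vary. This is what the paper's approach (via~\cite{kkdln2025}) does: once arithmetic and points are interpreted, one defines a predicate expressing that a finite collection of regular open sets covers $M$ (or the closure of a given set), and then one quantifies over such collections to express $\dim_{\le n}$. With that correction the rest of your outline---the membership predicate, the multiplicity count, and the refinement-by-regular-opens observation---goes through as you describe.
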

The proof of Proposition~\ref{prop:dimension} is carried out identically to Theorem 6.1 in~\cite{kkdln2025}.
Similarly, Section 6.2 of~\cite{kkdln2025} carries through without change for diffeomorphisms,
and so we obtain:
\begin{lem}\label{lem:balls}
There exist $\AGAPE$ sentences $\flows_n$ and $\param_n$ such that
the following hold for all $(M,G)\in\MMc$ with $\dim M=n$:
\begin{enumerate}
\item We have a map $\Phi$ from 
$$\DD_0:=\{(U,\underline{g},p)\in\RO(M)\times G^n\times M : \flows_n(U,\underline{g},p)\}$$
to the set 
$$
\{f: [-1,1]^n \to M |\; f\text{ is a collared topological ball in }M\}
$$
such that whenever $(U,\underline{g},p,\underline{r},q)\in\DD_0\times B(1)\times M$ 
we have
$$ \Phi(U,\underline{g},p)(\underline{r})=q \Longleftrightarrow \param_n(U,\underline{g},p,\underline{r},q),$$
and such that $\models\param_n(U,\underline{g},p,\underline{0},p)$.
\item\label{p:flows2}The collection of all smooth collared balls $U$ satisfying
\[\models(\exists \underline{\gamma}\exists \pi)\,\on{flows}_{n}(U,\underline{\gamma},\pi)\]
forms a basis for the topology of $M$.
\end{enumerate}
\end{lem}
Recall that in a structure, a \emph{definable set} is a subset of a certain tuples of the universe consisting of the elements satisfying some first order formula. Such a set is  \emph{uniformly definable} if the formula is the same for all choices of a structure in a given fixed collection. Uniformly definable relations or functions are defined in the same way.
In words, Lemma~\ref{lem:balls} says that we have a uniformly definable collection of collared topological balls in each dimension of $\MMc$ that are fine enough to generate the topology of a smooth manifold.
The reader will note that we do not assert that all (smooth or topological) collared balls are definable; the image of $G$ does not act transitively on such balls.

Using the preceding discussion, we will detect homeomorphism types, and also compactness by first order sentences.

\begin{thm}\label{thm:homeomorphism type}
Let $(M,G),(N,H)\in\MMv$ such that $M$ is closed and connected.
Then there exists a sentence $\phi_{M,0}\in \Th{G}$ such that for all smooth (not necessarily closed
    or even compact) manifolds $N$ and
    \[\Diff^\infty_{0(,\vol)}(N)\leq H\leq\Homeo_{(\vol)}(N),\]
    we have $\phi_{M,0}\in\Th{H}$ if and only if $M$ and $N$ are
    homeomorphic.
\end{thm}
\begin{proof}
First we can distinguish the one-dimensional and higher dimensional cases by Proposition~\ref{prop:dim1}.
We also let $\phi_{M,0}$ to force that the ambient manifold is connected, and covered by a certain number of collared balls (hence closed). 
The conclusion is now clear when $\dim M=1$ by Proposition~\ref{prop:dim1}. 
For $M$ of dimension two or more, the hypotheses of Lemmas 8.1 and 8.2 in~\cite{kkdln2025} are satisfied, in which case the conclusion of Theorem 1.4 in that paper holds.
\end{proof}

In particular, the ``$M$ and $N$ are homeomorphic'' part of the conclusion in Theorem~\ref{thm:main} is now proved.
\subsection{Detecting compactness}
The content of this subsection is not relevant to the proof of Main Theorem, but is interesting in that the first order theory of groups can certify the compactness of a manifold.
Regarding compactness, we need to impose the hypothesis that $\partial M=\varnothing$; indeed, there is no way to distinguish between $\R$ and $I$ from the first order theory of their respective homeomorphism groups because the groups are in fact isomorphic to each other.

\begin{prop}\label{prop:compact-closed}
There exists a group sentence $\psi$ such that whenever $(M,G)$ belongs to $\MMv$ with $M$ being compact,
we have $\models \psi$ iff $M$ is closed.
\end{prop}
\begin{proof}
Let $(M,G)$ be as given. Note that Lemma~\ref{lem:infcomp} and its proof hold in verbatim even if we assume $M$ is compact rather than closed. Hence as in Lemma 3.14 of \cite{kkdln2025}, we have an action formula $\on{finint}(U,V)$ expressing that only finitely many components of $U$ intersects $V$. Using this we can define an action formula $\on{touch}_\partial(U)$ expressing that the closure of $U$ nontrivially intersects $\partial M$, also as in~\cite{kkdln2025}. This certifies that $M$ is not closed by a group sentence, by the interpretability (Theorem~\ref{thm:rubin}).
\end{proof}

\begin{prop}\label{prop:compact-noncompact}
    There exists a group sentence $\psi$ which is true in the homeomorphism group of an arbitrary connected boundaryless manifold $N$ if and only if $N$ is compact.
\end{prop}
\begin{proof}
    We have that $N$ is compact if and only if it is sequentially compact.
    Thus, a boundaryless manifold $N$ is noncompact if and only if
there is a regular open set $U\in\RO(N)$ such that:
\begin{enumerate}
    \item $U$ has at least $n$ components for every $n\in\N$.
    \item For all $W\subseteq U$ with the property
that each component of $U$ 
contains exactly one component of $W$, and
for all $V\in\RO(N)$ with $V$ compactly
contained in a collared open ball in $N$, there exists a subset $W_0\subseteq W$ such that:
\begin{enumerate}
    \item Each component of $W$ contains exactly one component of $W_0$.
    \item $W_0\cap V$ has finitely many components.
\end{enumerate}
\end{enumerate}
In order to express part (1) above, we use the interpretation of arithmetic by regular open sets contained in some compact balls, and then we express counting of connected components for such open sets. This way, we can quantify over natural numbers.
Part (2) is clearly expressible.

To see that these conditions give the desired sentence $\psi$,
suppose that $N$ is compact, and let
$U$ be a regular open set with infinitely many components.

Fix a compatible metric on
$N$ and choose a dispersed sequence
collared balls, one in each of the components of $U$, with
shrinking radii; call their union, which is equal to their join, $W$. Note that these balls accumulate on some point $p\in N$, since $N$ is
compact. Choose an arbitrary $V$ containing $p$ which is compactly contained in a ball in
$M$, we observe that for any suitable $W_0$ as given by the conditions above,
the set $V$ must contain infinitely many
components of $W_0$.

Conversely, suppose at $N$ is not compact, and choose an exhaustion
\[N=\bigcup_{i\in\omega} K_i\] by increasing compact sets; for the sake of concreteness,
we may choose a compatible metric on $N$ and let $K_i$ consists of the closure of a ball
of radius $i$ centered at fixed but arbitrary point.
Choose $U$ such that for all $i$, at least
one component of $U$ lies in $N\setminus K_i$ and $U\cap K_i$ is a regular open set
with finitely many components. Choosing $W\subseteq U$ arbitrary, we see that
components of $W$ exit every compact subset of $M$. If $V$ is compactly contained in a ball
in $N$ then $V$ is contained in some $K_i$. Choosing sufficiently small balls in each
component of $W\cap K_i$, we arrange for $W_0$ to meet $V$ in finitely many components.
\end{proof}

With some small modifications, one may deal with the case of disconnected manifolds;
we omit the details for brevity.
An analogous result holds for measure preserving homeomorphisms in dimension at least two,
and for diffeomorphisms. In particular:

\begin{thm}\label{thm:cpt-detect}
There is a sentence $\psi_{\on{cpt}}$ such that whenever a pair $(M,G)$ satisfies one of the following conditions 
we have $G\models \psi_{\on{cpt}}$ iff $M$ is compact:
\begin{enumerate}
    \item $M$ is a topological manifold with $\partial M=\varnothing$
    and 
    \[
    \Homeo_{0(,\vol)}(M)\le G\le \Homeo_{(\vol)}(M).\]
    \item $M$ is a smooth manifold with $\partial M=\varnothing$
    and 
    \[
    \Diff^\infty_{0(,\vol)}(M)\le G\le \Diff^1_{(\vol)}(M).\]
\end{enumerate}
\end{thm}

\begin{rem}
    We note that if $(M,G)$ is as in Theorem~\ref{thm:cpt-detect} then for all $n$ one
    can express that $M$ has at least $n$ noncompact ends. Indeed, one expresses the
    existence of a regular open set $U$ such that $U^{\perp}$ contains at least $n$
    components $\{V_1,\ldots,V_n\}$ which are themselves noncompact; the noncompactness
    of each $V_i$ is checked as in the proof of Proposition~\ref{prop:compact-noncompact}. In particular, the theory of $G$ determines the number of noncompact
    ends of $M$.
\end{rem}

\section{Manifold Recognition}\label{s:mfd}
We now gather some generalities from differential topology which will allow us to distinguish between different smooth structures from the first order theory of diffeomorphism groups. 
\emph{For brevity, we restrict our attention to the subclass $\MM'_{(\vol)}$ of $\MMc$, consisting of pairs $(M,G)$ such that $M$ is a smooth, closed, connected manifold  and such that $$\Diff^\infty_{0(,\vol)}(M)\le G\le\Diff^1_{(\vol)}(M).$$.}

\subsection{Defining smooth atlases}
From Lemma~\ref{lem:balls}, we are able to define a topological atlas on $M$,
which is to say a covering of $M$ by topological collared balls; specifically, we can
express the fact that $\mathcal U=\{U_1,\ldots,U_n\}$ covers $M$ and that each $U_i$ is a
topological collared ball. In order to prove first order rigidity of diffeomorphism groups
of $M$, we will need to be able to discuss a smooth atlas, i.e.~when the elements of 
$\mathcal U$ are smooth collared balls in $M$. Recall here that a \emph{smooth collared
ball} in $M$ (which has dimension $n$) is a map \[\phi\colon B^n(1)\longrightarrow M\] which
is a diffeomorphism onto its image, which extends to a map $\Phi$ defined on $B^n(2)$,
which is also a diffeomorphism onto its image.

The main technical fact which allows us to achieve this goal is the Bochner--Montgomery theorem, which we will apply for the case $G:=(0,1)^n$ equipped with the addition operation.

\begin{thm}[{\cite[Theorem 4]{BM1945}}]\label{thm:bochner-montgomery}
Let $X$ be a smooth, connected, boundaryless manifold, and let $k\in\bN_+$.
If we have an action
$$\Phi: G\times X \to X$$
of a local Lie group $G$ such that $\Phi$ is jointly continuous,
and such that $x\mapsto\Phi(g,x)$ is $C^k$ for each $g\in G$, then $\Phi$ is jointly $C^k$.
\end{thm}

Recall from Lemma~\ref{lem:balls} that we have an AGAPE-definable set  $$\DD_0\subseteq \RO(M)\times G^n\times M$$ uniformly over $(M,G)\in\MM^c_{(\vol)}$ with $\dim M=n$; furthermore,  each $\delta\in\DD_0$ determines a topological ball $\phi_\delta:B(1)\to M$ such that $\phi_\delta(r)$ is the unique $q\in M$ with
$$\AGAPE(M,G)\models \param_n(\delta,r,q).$$ We will impose further first-order conditions on $\DD_0$ 
such that Theorem~\ref{thm:bochner-montgomery} forces parametrized topological collared balls to be smooth.
For this, it will be useful for us to explain more details on the construction of $\phi_\delta$ in~\cite{kkdln2025}.

\begin{prop}\label{prop:smooth-ball}
There exists an $\AGAPE$--definable subset $\DD_1$ of $\DD_0$ uniformly over $(M,G)\in\MM'_{(\vol)}$ with $\dim M=n$ such that the following hold:
\begin{enumerate}
    \item For each $\delta\in\DD_1$, the map 
    $$\phi_\delta:[-1,1]^n\to M$$ defined above is a $C^1$--collared ball.
    \item The collection of the images of $\phi_\delta$ with $\delta\in\DD_1$ forms a basis for $M$.
\end{enumerate}
\end{prop}

\begin{proof}
We will apply Bochner--Montgomery Theorem to force that a definable parametrizable ball is $C^1$--smooth. We take extra
care so that the theorem is applied  on a smooth submanifold of $M$ or of $\mathbb{R}^n$, as is required in the hypothesis of that theorem. Let us set $Q:=(-1,1)^n$. Note we have a local translation action of the local Lie group $Q$ on $Q$. 

The proof is divided into three steps. In the first step, we prove that a parametrized topological ball $\phi$ can be forced to be a $C^1$ map. In the second, we force $\phi^{-1}$ is also $C^1$. In the third, we provide a model for such $\phi$ so that the images can be made to be an arbitrarily small neighborhood of a given point.

For the first step, let us set $\phi:=\phi_\delta: \cl Q \to M$ defined by a fixed parameter $\delta\in\DD_0$ in the $\AGAPE$ language. We will establish that $\phi$ is $C^1$ near an arbitrary point $v_0\in Q$.
We pick a smooth chart $u:Q\to M$ such that $y_0=\phi(v_0)=u(0)$
and such that $\cl u(Q)\subseteq \phi(Q)$. 
Let us define $H$ to be the local action of $Q$ on the smooth manifold $u(Q)$ defined as follows:
$$
(v,y)\stackrel{H}{\mapsto} \phi(v+\phi^{-1}(y)).
$$
Note that for all $y\in u(Q)$, the action is defined on the zero neighborhood 
$$
v\in  \phi^{-1}u(Q)-\phi^{-1}(y).
$$
Using the AGAPE language, we can further require that 
whenever $v\in Q$ there exists some $g_v\in G\le\Diff^1(M)$ such that 
$$\phi(v+\phi^{-1}(y))=g_v(y)$$
for all $y\in \phi\left(Q\cap (Q-v)\right)$.
By Bochner--Montgomery Theorem, the map
$$
(v,y_0)\to H(v,y_0)=\phi(v+v_0)
$$
is $C^1$ for $v\in \phi^{-1}u(Q)-v_0$. It follows that $\phi$ is $C^1$ near $v_0$.

As the second step, let us establish that $\phi^{-1}$ is $C^1$ near $y_0=\phi(v_0)$.
We let $Q_1:=c_1Q$ for some sufficiently small, definable $c_1>0$ such that $0\in Q_1$. 
We can further require that for all $v\in Q_1$
there exists $h_v\in \Diff^\infty_{\vol}(\mathbb{R}^n)$
such that $\suppe h_v\subseteq 0.9 Q$
and such that every $w\in Q_1$ satisfies
$$h_v(w)=v+w.$$
For an example of such $h_v$, we can choose smooth volume-preserving flow $\Phi_1,\ldots,\Phi_n$ supported in $0.9Q$ such that $\Phi_i^r(x)=x+re_i$ for all $x\in Q_1$ and $|r|\le c_1$. Here, $e_i$ denotes the $i$--th standard basis vector. Then we may set 
$$h_v:=\prod_i \Phi_i^{r_i}$$
for $v=(r_1,\ldots, r_n)\in Q_1$.
Let us now require that for all $g\in G$ with $\suppe g\subseteq \phi(Q)$, 
we have $\phi^{-1}g\phi$ is $C^1$ on $Q$.
This is indeed possible since $\phi$ is definable
and since the second order arithmetic is interpreted in $\AGAPE$.
Applying this requirement to 
the volume preserving smooth diffeomorphism of $M$ extending $u\circ h_v\circ u^{-1}$ by the identity outside $u(0.9Q)$, we see for each $v\in Q_1$ that
$$
(v,x)\stackrel{H'}{\mapsto} \phi^{-1}\circ u\left(v+u^{-1}\circ \phi(x)\right)$$
is $C^1$ for $x\in \phi^{-1}\circ u(Q_1)$.
Let us pick a smooth metric ball $Q_2$ around $v_0$ contained in $\phi^{-1}u(Q_1)$.
By applying Bochner--Montomery theorem to the action $H'$ on the smooth manifold $Q_2$, we see that the map
$$v\mapsto H'(v,v_0)=\phi^{-1}\circ u(v)$$
is $C^1$, which means $\phi^{-1}$ is $C^1$ near $u(0)=y_0$.

For the last step, we need to establish $\DD_1$ chosen above contains sufficiently small smooth collared balls. 
For this, let us pick an arbitrary $p\in M$ and choose a smooth, Darboux cooridnate $\phi:Q\to M$ for the volume form around $p$; this means that the pull-back measure of $M$ coincides with the Lebesgue measure on $Q$, and such that $\phi(0)=p$. Furthermore, $\phi(Q)$ can be chosen to be arbitrarily small around $p$. 
In part (1), we required that each $v\in Q$, 
the local Lie group action
$$y\mapsto \phi(v+\phi^{-1}(y))$$
extends to some $g_v\in\Diff^\infty(M)$. This process can be done again using circular flows, and using the conjugation of $h_v$ by $\phi$ exactly as above.
\end{proof}

We remark that we cannot arrange for $\DD_1$ to parametrize of all $C^1$--smooth balls because the elements of $\DD_1$ ultimately arise from the action of $G$, and $G$ may not contain all $C^1$ diffeomorphisms of $M$.

\subsection{The Cheeger--Kister Lemma in the $C^\infty$ category}
\newcommand{\ES}[0]{\EE^{\infty}(n,k)}
Here, the notation
$A\hookrightarrow B$
will refer to an inclusion map from a set $A$ into a set $B$.
When the embedding has the name $q$, the arrow is decorated with a label as $ A\stackrel{q}{\hookrightarrow}B$. Throughout this section
and for the remainder of the paper, we give all spaces of $C^{\infty}$
maps the \emph{weak topology}, which is the union of the $C^r$ topologies
for finite values of $r$; see Section 2.1 of~\cite{hirsch2012differential}.

The following is well known; see~\cite{Brown1962b}, for example.
\begin{lem}\label{lem:exhaust}
If $W$ is a smooth or topological $n$--manifold, 
then there exists a sequence of compact $n$--manifolds $\{C_i\}_{i\in\omega}$ such that
\[W=\overline{\bigcup_{i\in\omega} C_i}\] and such that \[C_i\sse\inte C_{i+1}\] for all $i$.
\end{lem}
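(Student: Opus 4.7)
The plan is to realize the $C_i$ as sublevel sets of a proper exhaustion function $f \colon W \to [0, \infty)$. In either regularity, second countability and local compactness of $W$ furnish such an $f$: choose a countable locally finite open cover $\{V_k\}$ of $W$ by precompact sets, take a partition of unity $\{\rho_k\}$ (smooth when $W$ is smooth) subordinate to it, and set $f = \sum_k k\cdot\rho_k$. Then $f^{-1}([0,N]) \subseteq \bigcup_{k \le N}\overline{V_k}$, a finite union of compact sets, so $f$ is proper.

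In the smooth case, Sard's theorem produces a strictly increasing, unbounded sequence of regular values $t_1 < t_2 < \cdots$ of $f$, and I would set $C_i := f^{-1}([0,t_i])$. Properness of $f$ makes each $C_i$ compact, while regularity of $t_i$ together with the implicit function theorem makes $f^{-1}(t_i)$ a smooth closed $(n-1)$-submanifold and $C_i$ a smooth $n$-manifold with boundary. The containment $C_i \subseteq f^{-1}([0,t_{i+1})) \subseteq \inte C_{i+1}$ is immediate from strict monotonicity, and $W = \bigcup_i C_i$ follows from $t_i \to \infty$ and properness of $f$.

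In the topological case Sard's theorem is unavailable, and a generic continuous level set need not be a manifold at all. The plan is instead to build $C_i$ inductively as a union of coordinate balls: refine $\{V_k\}$ so that each $V_k$ lies in a chart in which $V_k$ is a round open Euclidean ball, set $C_1 := \overline{V_1}$, and at each stage adjoin finitely many further $\overline{V_k}$'s, both to place the previous $C_i$ in the interior of $C_{i+1}$ and to absorb the next block of the locally finite cover. The main obstacle is ensuring that each $C_i$ really is a topological $n$-manifold with locally flat boundary, since a naive union of closed coordinate balls may fail to be locally Euclidean along intersections of the bounding spheres. The resolution is to perturb each newly adjoined ball slightly within its chart so that its bounding sphere meets the already-constructed $\partial C_i$ in a locally flat codimension-one submanifold, and then to invoke Brown's collaring theorem~\cite{Brown1962b} to produce a bicollar of $\partial C_{i+1}$ and so upgrade it to a locally flat topological submanifold of $W$. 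Local finiteness of $\{V_k\}$ then guarantees $W = \bigcup_i C_i$.
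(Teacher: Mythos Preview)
The paper does not actually prove this lemma; it records it as well known and cites~\cite{Brown1962b}. Your smooth argument via a proper exhaustion function and Sard's theorem is the standard one and is correct---indeed it supplies more detail than the paper does.

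Your topological case, however, has a real gap at the perturbation step. You propose to perturb each newly adjoined round ball within its own chart so that its bounding sphere meets $\partial C_i$ in a locally flat submanifold of the right codimension. But in that chart the already-built $\partial C_i$ is only a locally flat \emph{topological} hypersurface---it need not carry any smooth or PL structure---and there is no elementary way to put a sphere in general position with respect to such an object. Topological transversality is a deep theorem (Kirby--Siebenmann in high dimensions, Quinn in dimension four), not something achieved by a small perturbation within a chart. Without it, your finite union of closed coordinate balls need not be a manifold with boundary at all, and Brown's collaring theorem---which presupposes that you already have a manifold with boundary---never gets off the ground. The honest route in the topological category is either to cite the literature, as the paper does, or to invoke handle decompositions of topological manifolds, which again rests on the same heavy machinery.
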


Although the authors believe that the following lemma belongs to the classical literature, we include a proof here for completeness. Note that its $C^0$ version is the main result of~\cite{EK1971}.
\begin{lem}\label{lem:C1-EK}
Let $M$ be a compact smooth $n$--manifold, possibly with boundary.
Let \[C\sse W\subseteq\cl W\sse M\setminus \partial M\] be subsets such that
$C$ is compact and $W$ is open.
If $q\co W\longrightarrow M$ is a smooth embedding sufficiently near from the identity,
then there exists a compact set $K$ with
\[C\subseteq\inte K\subseteq K\subseteq \inte (M)\] 
and $\alpha\in\Diff_0^{\infty}(M)$  such that the following diagram commutes:
\[
\begin{tikzcd}
& & M \\
C \arrow[rru,"q\restriction_C",shift left=1ex]\arrow[r,hook]& K \arrow[r,hook] & M \arrow[u,"\alpha"] & W\setminus K \arrow[l,hook']\arrow[lu,hook'],
\end{tikzcd}
\]
and such that $\alpha$ is as close to the identity as desired.
\end{lem}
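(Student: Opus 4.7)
The plan is to reduce Lemma~\ref{lem:C1-EK} to the smooth isotopy extension theorem (see~\cite[Chapter 8]{hirsch2012differential}). Using Lemma~\ref{lem:exhaust}, I would first fix a compact $K$ with $C \subset \inte K \subset K \subset W \cap \inte(M)$, together with an intermediate open set $W_0$ satisfying $K \subset W_0 \subset \cl W_0 \subset W$, and a smooth Riemannian metric on $M$. Under the hypothesis that $q$ is sufficiently $C^1$-close to the inclusion on $\cl W_0$ (which is the regime relevant to both applications and the moreover clause), each point $q(x)$ lies within the injectivity radius of $x$, and the formula
\[
q_t(x) := \exp_x\!\bigl(t \cdot \exp_x^{-1}(q(x))\bigr), \qquad t \in [0,1],
\]
defines a smooth isotopy of embeddings $q_t \colon W_0 \longrightarrow \inte(M)$ with $q_0$ equal to the inclusion and $q_1 = q|_{W_0}$. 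By construction, $q_t$ is $C^\infty$-close to the trivial isotopy whenever $q$ is $C^\infty$-close to the inclusion, and the assignment $q \mapsto (q_t)_{t\in[0,1]}$ is continuous in the fine $C^\infty$ topology.

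Next I would apply the smooth isotopy extension theorem to the restricted isotopy $q_t|_K \colon K \to \inte(M)$, combined with a bump function $\chi$ supported in $\inte K$ and equal to $1$ on a neighborhood of $C$ (shrinking or enlarging $K$ harmlessly if necessary). This produces a smooth ambient isotopy $\alpha_t \in \Diff^\infty_0(M)$ with $\alpha_0 = \operatorname{id}_M$, $\alpha_t|_C = q_t|_C$, and $\supp(\alpha_t) \subset K$. Setting $\alpha := \alpha_1$ gives an element of $\Diff^\infty_0(M)$, and the commutativity of the displayed diagram is then immediate: $\alpha|_C = q_1|_C = q|_C$ along the upper triangle, while $\alpha|_{W \setminus K} = \operatorname{id}$ along the lower triangle, because $\supp(\alpha) \subset K$ forces $\alpha$ to restrict to the identity on $M \setminus K \supseteq W \setminus K$.

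I expect the main obstacle to be the quantitative control needed for the moreover clause, namely continuity of $q \mapsto \alpha$ at the inclusion in the (fine Whitney) $C^\infty$ topology. Once the bump function $\chi$ and the Riemannian metric are fixed once and for all, the $C^\infty$-size of $\alpha_t$ is bounded by a quantity depending polynomially on the $C^\infty$-sizes of $q_t$ and of the derivatives of $\chi$; combined with the smooth dependence of the exponential map on its arguments, this controls $\alpha$ by the $C^\infty$-distance from $q$ to the inclusion. The subtle bookkeeping is to ensure that the constants in this estimate depend only on the fixed data $(C, K, W_0, \chi, g)$, and not on the particular embedding $q$, so that taking $q$ close enough to the inclusion forces $\alpha$ to lie in an arbitrary preassigned $C^\infty$-neighborhood of $\operatorname{id}_M$.
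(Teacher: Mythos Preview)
Your proposal is correct and follows essentially the same route as the paper: both construct an isotopy from the inclusion to $q$ on a neighborhood of $C$ (you via explicit geodesic interpolation, the paper via~\cite[Corollary~2.2.5]{Wall2016book}) and then extend it to a compactly supported ambient diffeotopy of $M$ (you via a bump-function cutoff of the generating time-dependent vector field, the paper via~\cite[Theorem~2.4.2]{Wall2016book} after enlarging the domain to $D=C\sqcup(M\setminus\inte K)$ so that the extension is forced to be the identity off $\inte K$). Both arguments, and both treatments of the ``moreover'' clause, work only in the regime where $q$ is close to the inclusion, which is also the only case needed downstream in Lemma~\ref{lem:c-infty-ck}.
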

\bpf
By Lemma~\ref{lem:exhaust}, we may assume without loss of generality that $C$ and $\cl W$ are submanifolds of $M$, so that $W$ is regular open.
Assuming $q$ is sufficiently close to the inclusion map, we may further require that the interior of some compact submanifold $K$ of $W$ contains $C\cup q(C)$. Set
\begin{align*}
    D&:=C\sqcup (M\setminus\inte K),\\
    D'&:=q(C)\sqcup (M\setminus\inte K).
\end{align*}
There is an embedding $Q\co D\longrightarrow M$,
extending $q\co C\longrightarrow q(C)$
by the identity outside $K$.
This implies that $Q$ and the inclusion map are ambiently diffeotopic; we can see this from~\cite[Proposition 4.4.4]{Wall2016book},
whose proof first uses~\cite[Corollary 2.2.5]{Wall2016book} to construct an isotopy
\[F: C\times I\longrightarrow M\] between $q_{\restriction C}$ and the inclusion $C\hookrightarrow M$ and then extends $F$ to an ambient isotopy \[G :M\times I
\longrightarrow M\] using~\cite[Theorem 2.4.2]{Wall2016book}.
In particular, $Q$ extends to a diffeomorphism $\alpha$, as required by the conclusion. 

To conclude, we need to show that the map $\alpha$ can be taken arbitrarily close to the identity in the (weak)
$C^{\infty}$ topology, provided that
$q_{\restriction C}$ is sufficiently close to the inclusion of $C$ in $M$.
That $F$ can be taken to be arbitrarily close to the constant isotopy that equals $q_{\restriction C}$ at every point in time follows from the fact that the map 
 \[H:W\times I\longrightarrow M\] used in the proof
 of~\cite[Corollary 2.2.5]{Wall2016book} is $C^{\infty}$.
 It follows that both the vector field on $C\times\bR$ with values in $T(M\times \bR)$ in the proof of~\cite[Theorem 2.4.2]{Wall2016book}, as well as its extension to
 $M\times \bR$ can be taken to be arbitrarily close in the $C^{\infty}$
 topology to the vector field $\partial_{t}$ on $M\times \bR$ that projects to $0$ and
 $\partial/\partial t$ on the two respective factors.
 The lemma now follows. 
\epf

Recall that we write $B^n(r)$ for the closed ball of radius $r$ in $\R^n$, centered at the origin. 
For positive integers $n,k$, we denote by $\EE^{\infty}(n,k)$ the set of all tuples of $C^{\infty}$--embeddings
\[
\underline f=(f_{1},\ldots,f_{k})\]
from $B^n(k+1)$ to $\bR^{2n+1}$ such that the set
\[C(\underline f):=\bigcup_{1\leq j\leq k} \im f_{j}\sse\bR^{2n+1}.\] 
is a closed connected $n$--submanifold $M$ of $\bR^{2n+1}$, with
\[M=\bigcup_{1\leq j\leq k} f_{j}(B^{n}(1)).\]

By the Whitney Embedding Theorem, every smooth, compact $n$--manifold $M$ is
$C^{\infty}$--diffeomorphic to $C(\underline{f})$ for some tuple \[\underline{f}=
\underline{f}_M=(f_{i,j})\in\EE^{\infty}(n,k)\] as above,
which we call a \emph{parametrized $C^{\infty}$--cover} of the manifold $C(\underline{f})$.
The space $\EE^{\infty}(n,k)$ inherits a topology from the weak topology
on the function space
\[C^{\infty}(B^n(k+1),\bR^{(2n+1)k}).\] This space is metrizable
since $B^n(k+1)$ is compact; see Appendix A.4 of~\cite{Wall2016book}.

We note the following rigidity result, which is a $C^{\infty}$--variation of a topological argument in~\cite{CK1970}.

\newcommand{\bde}[0]{\mathbf{\delta}}
\newcommand{\bep}[0]{\mathbf{\epsilon}}
\newcommand{\bm}[0]{\mathbf{\mu}}
\newcommand{\bn}[0]{\mathbf{\nu}}

\begin{lem}\cite{CK1970}\label{lem:c-infty-ck}
For each $\underline{f}\in\EE^{\infty}(n,k)$ and for each $\epsilon>0$,  there exists $\delta>0$ such that every $\underline{f}\in\EE^{\infty}(n,k)$ that is at most $\delta$--far from $\underline f$ by the $C^{\infty}$--distance admits a $C^{\infty}$ diffeomorphism 
\[ C(\underline{f})\longrightarrow  C(\underline{f'})\] 
that is at most $\epsilon$--far from the identity map by the $C^{\infty}$--distance. \end{lem}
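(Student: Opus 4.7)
The plan is to inductively apply Lemma~\ref{lem:C1-EK} within a fixed compact ball $B \subset \bR^{2n+1}$ containing all relevant data, producing an ambient diffeomorphism $h \in \Diff^\infty_0(B)$ that is $C^\infty$-close to the identity and satisfies $h(M) = M'$, where $M := C(\underline{f})$ and $M' := C(\underline{f'})$. The restriction $h|_M \colon M \to M'$ is then the desired $C^\infty$ diffeomorphism.

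Starting with $h_0 := \mathrm{id}_B$, I would proceed by induction on $j = 1, \ldots, k$, aligning one chart at a time. At the $j$-th step, suppose $h_{j-1} \in \Diff^\infty_0(B)$ is $C^\infty$-close to the identity and nearly aligns the first $j-1$ charts. Consider the smooth embedding
\[
q_j := f'_j \circ f_j^{-1} \circ h_{j-1}^{-1}
\]
defined on $W_j := h_{j-1}(f_j(B^n(k+1)))$. For $\underline{f'}$ sufficiently close to $\underline{f}$ and $h_{j-1}$ sufficiently close to the identity, $q_j$ is $C^\infty$-close to the inclusion $W_j \hookrightarrow B$. Applying Lemma~\ref{lem:C1-EK} with $C := h_{j-1}(f_j(\overline{B^n(1)}))$ yields a diffeomorphism $\beta_j \in \Diff^\infty_0(B)$ close to the identity and agreeing with $q_j$ on $C$. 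Setting $h_j := \beta_j \circ h_{j-1}$, we obtain $h_j \circ f_j = f'_j$ on $\overline{B^n(1)}$. After $k$ stages, $h_k$ aligns every central ball, so $h_k(M) = M'$. The quantitative closeness assertion in Lemma~\ref{lem:C1-EK} combined with continuity of composition in the $C^\infty$ topology ensures that $h_k$ is $\epsilon$-close to the identity, for $\delta$ sufficiently small.

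The main obstacle is that $\beta_j$ may disturb alignments already achieved for indices $i < j$ whenever its support overlaps the previously aligned regions $f'_i(\overline{B^n(1)})$. To absorb this effect, I would phrase the induction with a geometric tolerance: at step $j$ one requires $h_j \circ f_i$ to be within $\epsilon_j$ of $f'_i$ on $\overline{B^n(1)}$ for all $i \le j$, where the $\epsilon_j$ are chosen small enough (depending on $\epsilon$ and $k$) so that the cumulative drift across all $k$ stages remains bounded by $\epsilon$. Since $k$ is fixed, this determines the required $\delta$ as a function of $\epsilon$. The overall argument is a smooth refinement of the topological construction of~\cite{CK1970}, with Lemma~\ref{lem:C1-EK} in the role of its continuous analogue.
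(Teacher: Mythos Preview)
There are two genuine gaps.

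First, Lemma~\ref{lem:C1-EK} as stated applies only to codimension-zero embeddings: it requires $W$ to be \emph{open} in the ambient manifold. Your $W_j = h_{j-1}(f_j(B^n(k+1)))$ is an $n$-dimensional submanifold of the $(2n+1)$-dimensional ball $B$, hence not open in $B$, so the lemma does not apply as you invoke it. Second, and more seriously, your ``geometric tolerance'' workaround only arranges that $h_k\circ f_i$ is \emph{close} to $f'_i$ on $\overline{B^n(1)}$, not equal. Then $h_k(M)$ is a submanifold of $B$ that is $C^\infty$-close to $M'$ but need not coincide with it, and $h_k|_M$ is a diffeomorphism onto $h_k(M)$, not onto $M'$. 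Attempting instead a relative isotopy extension that fixes the already-aligned pieces also fails: on an overlap $f_i(\overline{B^n(1)})\cap f_j(B^n(k+1))$ with $i<j$, your $q_j$ equals the inclusion only if $f'_j\circ f_j^{-1}=f'_i\circ f_i^{-1}$ there, which one has no reason to expect.

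The paper avoids both problems by working intrinsically on $M$ rather than ambiently in $\bR^{2n+1}$. It constructs, by induction on $j$, an embedding $h\colon G_{j,k-j+1}\to C(\underline{f}')$, where $G_{j,\ell}=\bigcup_{i\le j}f_i(B^n(\ell))$. At the inductive step, Lemma~\ref{lem:C1-EK} is applied \emph{inside the $n$-manifold} $F_{j+1,k-j+1}=f_{j+1}(B^n(k-j+1))$ to the codimension-zero map $g_{j+1}^{-1}\circ h$ (with $g_{j+1}\colon F_{j+1,k+1}\to F'_{j+1,k+1}$ the chart-comparison diffeomorphism), producing $\alpha\in\Diff^\infty_0(F_{j+1,k-j+1})$ agreeing with $g_{j+1}^{-1}\circ h$ near the overlap with $G_{j,k-j}$. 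One then defines $h'=h$ on $G_{j,k-j}$ and $h'=g_{j+1}\circ\alpha$ on $F_{j+1,k-j}$; these match on the overlap by construction, so previous alignments are never disturbed. The shrinking radii $k-j+1$ leave room for this patching at every stage.
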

\begin{proof}
	We fix $\underline{f}$ as in the statement of the lemma,
    and consider $\underline{f}'\in\EE^{\infty}(n,k)$ arbitrary. We write 
	$D=d(\underline{f},\underline{f}')$ for the $C^{\infty}$--distance.
    For all \[1\leq j\leq k\quad\textrm{and} \quad 1\leq \ell\leq k+1,\] we write
	 \begin{equation*}
	  F_{j,\ell}=f_{j}(B^{n}(\ell)),\quad  F'_{j,\ell}=f'_{j}(B^{n}(\ell)),\quad  
	 	G_{j,\ell}=\bigcup_{1\leq i\leq j}F_{i,\ell}, \quad  G'_{j,\ell}=\bigcup_{1\leq i\leq j} F_{i,\ell}.
	 \end{equation*}
One shows by induction on $1\leq j\leq k$ the existence of maps \[\bde_{j}:\bR_{>0}\longrightarrow \bR_{>0}\] with the property that for all $\epsilon>0$, the inequality \[d(\underline{f},\underline{f}')<\bde_{j}(\epsilon)\] implies the existence of a diffeomorphic embedding \[h:G_{j,\ell}\longrightarrow
    C(\underline{f}')\] with the property that 
    \[d(\iota_{\restriction G_{j,\ell}},\iota'\circ h)<\epsilon\]
for some  $C^{\infty}$-metric $d$ on $C^{\infty}(G_{j,\ell},\bR^{n})$; here, $\ell=k-j+1$, and $\iota$ and $\iota'$ denote the inclusions of $C(\underline{f})$ and $C(\underline{f}')$ into $\bR^{2n+1}$,
    respectively. Note that the conclusion of the lemma then follows by taking $\delta=\bde_{k}(\epsilon)$ for any given $\epsilon>0$, since every diffeomorphic embedding between two closed manifolds of the same dimension is a diffeomorphism.
	
    To begin, note that there are
    functions \[\bn_{j}:\bR_{>0}\longrightarrow \bR_{>0}\] for $1\leq j\leq k$, with the property that if $D\leq\nu_{j}(\epsilon)$, then there exists a diffeomorphism
    \[g_{j}:F_{j,k+1}\stackrel{\cong}\longrightarrow F'_{j,k+1}\] such that $d'(\iota'\circ g_{j},\iota_{\restriction F_{j,k+1}})<\epsilon$ for some fixed compatible metric $d'$ on 
    the space $C^{\infty}(F_{j,k+1},\bR^{2n+1})$. 
	
	The existence of $\bde_{1}$ follows from the existence of $\nu_{1}$.
    Assume now that $\bde_{j}$ is given for $1\leq j<k$, and write 
    	\[F =G_{j,k-j+1}\cap F_{j+1,k-j+1}.\]
	
	If $h$ is a diffeomorphic embedding of $G_{j,k-j+1}$ into $C(\underline{f}')$ such that 
    $\iota'\circ h$ sufficiently close to $\iota_{\restriction G_{j,k-j+1}}$ in the space $C^{\infty}(G_{j,k-j+1},\bR^{2n+1})$, then \[h(F)\subseteq F'_{j+1,k+1}.\]
    Consequently, $g_{j+1}^{-1}\circ h$ is defined on the entirety of $F$.
    Note also that the distance between $g_{j+1}^{-1}\circ h_{\restriction F}$ and the inclusion of $F$ into $C(\underline{f})$ can be made smaller than any specified
    $\lambda>0$,  provided $\iota'\circ g_{j+1}$ and $\iota\circ h$ are both at distance at most $\delta(\lambda)$ from the restriction of $\iota$ to their domain.
	
	It follows that for any given $\lambda>0$,
 an embedding $h$ as above satisfying
 \[d'(\iota'\circ h,\iota)<\lambda\] exists, provided that $D$ is sufficiently small. Moreover, for $\lambda$ small enough, Lemma
 \ref{lem:C1-EK} provides a diffeomorphism $\alpha\in\Diff^{\infty}_0(F_{j+1,k-j+1})$ which is the identity in some neighborhood of $\partial F_{j+1,k-j+1}$, and which agrees with $g_{j+1}^{-1}\circ h$
 on some neighborhood $U$ of $G_{j,k-j}\cap F_{j+1,k-j}$.
 The conclusion of Lemma~\ref{lem:C1-EK}, together with the previous paragraph,
 ensures that $\alpha$ can be taken to lie in any prescribed neighborhood of the identity
 in the $C^{\infty}$ topology, provided that $D$ is small enough.
	
Let \[h':G_{j+1,k+j}\longrightarrow C(\underline{f}')\] be given by  
	\begin{equation*}
		h'(x)= 
		\begin{cases}
		  h(x)  &  \text{if } x\in G_{j,k-j} \\
		  g_{j+1}\circ\alpha & \text{if } x\in F_{j+1,k-j}.
		\end{cases}
	\end{equation*}
	This is clearly a well-defined $C^{\infty}$-immersion between the two sets. Since $g_{j+1}\circ\alpha$ agrees with $h$ on the open set $U$, it is easy to see that this is in fact an embedding, provided that $\iota'\circ h$
    is sufficiently close to $\iota_{\restriction G_{j+1,k-j}}$; moreover,
    it can be taken to be $\epsilon$-close to the inclusion, provided $D$ is small enough.
\end{proof}

\subsection{Proof of the Theorem~\ref{thm:mfd}}
By Proposition~\ref{prop:dim1}, we may assume $\dim M\ge2$.
We construct $\phi_M$ as the conjunction of the two sentences.
The first one is $\phi_{M,0}$ in Theorem~\ref{thm:homeomorphism type},
which detects the homeomorphism type of $M$. In particular, $\phi_M$ detects the dimension $n\ge 2$ of the ambient manifold.

Let us now describe the second sentence $\phi_{M,1}$, which roughly expresses that the ambient manifold has a $C^{\infty}$--embedding
into $\mathbb{R}^{2n+1}$ that is sufficiently close to a fixed smooth--embedding of $M$ in the sense of Lemma~\ref{lem:c-infty-ck}.
For this purpose, one proceeds as in the case of homeomorphism groups in~\cite{kkdln2025}. We fix a $C^{\infty}$ embedding of $M$ into $\R^{2n+1}$, which is encoded by a tuple $\underline f\in\EE^{\infty}(n,k)$. Let us also fix a ``Cheeger--Kister constant'' $\epsilon>0$ which gives the conclusion of Lemma~\ref{lem:c-infty-ck}; namely, an arbitrary $\underline f'\in\EE^{\infty}(n,k)$ that is $\epsilon$--close to $\underline f$ in the $C^{\infty}$ topology satisfies \[C(\underline f')\cong C(\underline f)
    \cong M.\] 
By subdividing $B^n(k+1)$ into finitely many subcubes with rational corners, we may assume that the derivatives up to any fixed finite order $\ell$ of $\underline f$ vary by at most $\epsilon'\ll\epsilon$ on each subcube. Letting $\epsilon'$ be sufficiently small, we obtain a formula $\phi_{M,1}$ such that 
for a smooth closed manifold $N$ homeomorphic to $M$
and for a group $H$ with
\[\Diff^\infty_{0(,\vol)}(N)\le H\le \Homeo_{(\vol)}(N)\]
 we have $H\models\phi_{M,1}$
 if and only if all of the following hold.
\begin{itemize}
    \item 
$N$ admits a $C^1$--smooth atlas consisting of
$k$ diffeomorphic copies of $B^n(k+1)$ such that the images of $B^n(1)\subseteq B^n(k+1)$ cover $N$. 
\item 
There exists some $\underline f'\in\EE^{\infty}(n,k)$ which differs from $\underline f$ in derivatives up to degree $\ell$ by at most $\epsilon'$,
and hence, which is $\epsilon$--close to $\underline f$ in the $C^{\infty}$ topology.
\end{itemize}

We set $\phi_M$ as the conjunction of $\phi_{M,0}$ and $\phi_{M,1}$.
Part~(\ref{p:mfd1}) of the theorem is clear. 
Let $N$ and $H$ as given in part~(\ref{p:mfd2}) so that $H\models \phi_M$.
We have that $N$ also admits a $C^1$--smooth atlas consisting of
$k$ diffeomorphic copies of $B^n(k+1)$ such that the union of the images in $\mathbb{R}^{2n+1}$ is smoothly diffeomorphic to $M$ by Lemma~\ref{lem:c-infty-ck}. This implies that $M$ and $N$ are $C^1$-- diffeomorphic (and hence, $C^\infty$--diffeomorphic). Theorem~\ref{thm:mfd} is thus proved.

\section{Regularity detection}\label{s:reg}
In this section, we establish Theorem~\ref{thm:reg}.
The basic strategy is to employ a certain non-smoothability condition in~\cite{Harrison1979} that constructs a homeomorphism
of a high-dimensional manifold with specified regularity.
Let us write
\[\mathbb{R}^n_+:=\{(x_1,\ldots,x_n)\in\mathbb{R}^n\mid x_n\ge0\}.\]
We denote by $D^2$ the closed unit 2-disk centered at the origin in $\mathbb{R}^2$.
We have the smooth standard diffeomorphism
\[
\gamma: \mathbb{R}^n_+\times S^1\cup \mathbb{R}^{n-1}\times D^2
\to \mathbb{R}^{n+1}
\]
extending the identity on $\mathbb{R}^{n-1}\times D^2$. Namely, 
we set
\[\gamma(x_1,\ldots,x_n,\theta)
:=(x_1,\dots,x_{n-1},(x_n+1)\cos\theta,(x_n+1)\sin\theta)\]
on $\mathbb{R}^n_+\times S^1$.

\begin{defn}\label{defn:harrison}
Let $\{r_k\}_{k\in\omega}$ be a positive real sequence,
and let $\{P_k\}_{k\in\omega}$ be a sequence of points converging to some point $P$, all of which belong to $\inte \mathbb{R}^{n}_+$.
Suppose we have some $f\in\Homeo_0(\mathbb{R}^{n+1})$ supported in the disjoint union of tubular neighborhoods of the collection $\{\gamma(P_k\times S^1)\}_{k\in\omega}$. 
If for all $k\in\mathbb{Z}_+$ and $\theta\in S^1$ we have \[f\circ \gamma(P_k,\theta)=\gamma(P_k,\theta+r_k),\] 
then we say $f$ is a \emph{Harrison homeomorphism} with respect to
$\{(P_k,r_k)\}_{k\in\omega}$.
\end{defn}
Intuitively, $f$ is making infinitely many wiggles along tubular neighborhoods $N_k$, which would become an obstruction for regularity. More precisely, we can summarize the result of Harrison and the key steps of her proof as follows:

\begin{thm}[\cite{Harrison1979}]\label{thm:harrison}
Let $n\ge1$ be an integer, and let $r>0$ be a real number.
\begin{enumerate}
    \item\label{p:harrison1} If a Harrison homeomorphism of $\mathbb{R}^{n+1}$ with respect to some $\{(P_k,r_k)\}_{k\in\omega}$ is topologically conjugate to a $C^r$ diffeomorphism, then \[\sum_k r_k^{n/r}<\infty.\]
    \item\label{p:harrison2} If for each $k$ there exists a
    collection of pairwise disjoint cubes $C_k$ in $\mathbb{R}^n_+$ of radius $a_k$ converging to a point,
    and if $r_k/a_k^r\to 0$, 
    then there exists a (standard volume-preserving) Harrison homeomorphism 
    $f\in \Diff^r_{0,\vol}(\mathbb{R}^{n+1})$
    with respect to $\{(\mathrm{center}(C_k),r_k)\}_{k\in\omega}$.
    \item If \[\sum_kr_k^{n/r}<\infty,\]
    and if $\{P_k\}_{k\in\omega}$ is an arbitrary sequence of points in $\mathbb{R}^n_+$ converging to a point, 
    then there exists a Harrison homeomorphism $f$ of $\bR^{n+1}$ with respect to 
    $\{(P_k,r_k)\}_{k\in\omega}$ that is topologically conjugate to a $C^r$ diffeomorphism.
\end{enumerate}
\end{thm}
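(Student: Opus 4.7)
I would treat the constructive parts~(2) and~(3) first and take up the obstruction~(1) last. For Part~(2), work in the cylindrical coordinates given by $\gamma$, under which the standard volume form on $\mathbb{R}^{n+1}$ pulls back to $(x_n+1)\,dx_1\cdots dx_n\,d\theta$. On each cube $C_k$, choose a smooth bump $\phi_k$ equal to $1$ near the center and supported in $C_k$, and define $f$ on $\gamma(C_k\times S^1)$ to be the time-one map of the vector field $r_k\,\phi_k(x)\,\partial_\theta$, extended by the identity elsewhere. The flow preserves the pulled-back volume form (it is a rotation at every fixed $x$) and rotates the circle $\gamma(P_k\times S^1)$ by exactly $r_k$. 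The local $C^r$-norm of this piece scales as $r_k/a_k^r$, so by hypothesis it tends to zero; since the cubes $C_k$ are disjoint, the pieces glue into a single element of $\Diff^r_{0,\omega}(\mathbb{R}^{n+1})$.

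For Part~(3), the summability $\sum_k r_k^{n/r}<\infty$ lets one choose radii $a_k$ satisfying both $\sum_k a_k^n<\infty$ and $r_k/a_k^r\to 0$; e.g.\ $a_k=\lambda_k r_k^{1/r}$ for any $\lambda_k\to\infty$ slowly enough. The summability of $a_k^n$ is exactly what allows one to pack pairwise disjoint cubes $C_k'\subseteq\mathbb{R}^n_+$ of radii $a_k$ accumulating at a single point. Then build an ambient homeomorphism $h$ of $\mathbb{R}^{n+1}$ of the form $\gamma\circ(h_0\times\mathrm{id}_{S^1})\circ\gamma^{-1}$ on the cylindrical region, where $h_0$ is a homeomorphism of $\mathbb{R}^n_+$ sending each $P_k$ to $\mathrm{center}(C_k')$; this $h$ preserves the $S^1$-coordinate on the relevant circles, so that conjugating the diffeomorphism $g_0$ produced by Part~(2) by $h^{-1}$ yields a Harrison homeomorphism on the original data $\{(P_k,r_k)\}$ that is, by construction, topologically conjugate to the $C^r$-diffeomorphism $g_0$.

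The main obstacle is Part~(1). Assuming $g=hfh^{-1}$ is $C^r$, the images $\Gamma_k:=h(\gamma(P_k\times S^1))$ are pairwise disjoint $g$-invariant Jordan curves, and $g$ acts on each with rotation number $r_k/(2\pi)$, which is a topological invariant preserved by $h$. The strategy is to associate to each $\Gamma_k$ a disjoint open normal tube $U_k\subseteq\mathbb{R}^{n+1}$ whose $(n+1)$-volume is bounded below by a constant multiple of $r_k^{n/r}$. Since the $\Gamma_k$ converge to a point, the $U_k$ may be arranged inside a single precompact region, and finiteness of the total Lebesgue volume then yields $\sum_k r_k^{n/r}<\infty$. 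The volume lower bound itself comes from a standard Hölder estimate for $C^r$-maps: in a chart where $g$ has a uniform $C^r$-bound $L$, the $C^0$-displacement satisfies $\|g-\mathrm{id}\|_{C^0(U)}\lesssim L\cdot\mathrm{diam}(U)^r$, while the invariant-curve/rotation-number structure forces $\|g-\mathrm{id}\|_{C^0(U_k)}\gtrsim r_k$ on any candidate neighborhood $U_k$ of $\Gamma_k$, so the transverse diameter of $U_k$ is at least of order $r_k^{1/r}$ and its transverse ($n$-dimensional) volume at least of order $r_k^{n/r}$. The delicate technical point, where the real work lies, is to phrase these estimates intrinsically in terms of $g$ and the topological data on the invariant circles, so as to be robust against the scale-distortion introduced by the conjugating homeomorphism $h$ and to permit the disjointness of the normal tubes $U_k$ near the common accumulation point.
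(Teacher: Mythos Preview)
The paper does not prove this theorem: it is stated with attribution to \cite{Harrison1979} and used as a black box in the proof of Theorem~\ref{thm:reg}. There is therefore no ``paper's own proof'' to compare against; what you have written is a sketch of Harrison's original argument rather than a reconstruction of anything in this paper.

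That said, your sketches for Parts~(2) and~(3) are essentially the standard constructions and are correct in outline. The bump-function rotation supported in $\gamma(C_k\times S^1)$ does preserve the pulled-back volume and has $C^r$-norm controlled by $r_k/a_k^r$; the packing step in Part~(3) is exactly the content of the Packing Lemma cited in Remark~\ref{rem:harrison}.

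For Part~(1), your overall strategy (disjoint tubes around the invariant circles, a volume lower bound of order $r_k^{n/r}$ per tube, and a total-volume constraint) is indeed Harrison's. However, the displacement inequality you write, $\|g-\mathrm{id}\|_{C^0(U)}\lesssim L\cdot\mathrm{diam}(U)^r$, is not valid as stated: a $C^r$-bound on $g$ alone does not force $g$ to be $C^0$-close to the identity on small sets unless $g$ agrees with the identity to order $r-1$ at some nearby point. What actually drives the estimate is the presence of \emph{fixed arcs} of $g$ accumulating on the limit circle (coming from the identity region between the tubular supports of $f$), together with an iterated mean-value/H\"older argument that compares displacement along $\Gamma_k$ to transverse distance from such an arc. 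You correctly flag that making this robust under the conjugating homeomorphism $h$ is the crux; but the specific inequality you propose would need to be replaced by that more careful comparison before the tube-volume bound follows.
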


\begin{rem}\label{rem:harrison}
\begin{enumerate}
\item Harrison's Theorem above holds even for $0<r<1$; in this case, the $C^r$ diffeomorphism in the theorem should be read as a $C^r$--H\"older homeomorphism, and similarly for $\Diff^r$.
\item   The case $n=3$ is missing in~\cite{Harrison1979}; this omission was due to that the stable homeomorphism theorem was not known at that time in dimension four. As this latter theorem is now known, all dimensions are covered.
\item\label{p:harrison-rem2}
If $\{a_k\}_{k\in\omega}$ is a sequence of real numbers satisfying \[\sum_k a_k^n<\infty,\] then one can find $\{C_k\}_{k\in\omega}$ satisfying the hypothesis of part (\ref{p:harrison2}) above; this is the content of the Packing Lemma (\cite[Lemma 2]{Harrison1979} and \cite{MM1968}). Furthermore, it is clear from the proof of the Packing Lemma that as long as $\{a_k\}_{k\in\omega}$ is definable in the second order arithmetic, so is the sequence $\{C_k\}_{k\in\omega}$.
\end{enumerate}
\end{rem}

Let us fix an arbitrary rational number $q>0$,
and set
\[
r_k:=\left(k\log^2(k+2)\right)^{-q/n}.\]
Note that \[\sum_k r_k^{n/r}<\infty\] if and only if $r\le q$.
Furthermore, the sequence
\[
a_k:=\left(k\log^{1.5}(k+2)\right)^{-1/n}\]
satisfies \[r_k/a_k^q\longrightarrow 0 \quad\textrm{and}\quad \sum_k a_k^n<\infty.\]
For $q\ge1$, we let a formula $\rho_{M,q}(g)$ express the following.
\begin{enumerate}[(a)]
    \item\label{p:rhomq}
    There exists a $C^1$--embedding
    $
    u\co \mathbb{R}^{n+1}\to M^{n+1}$,
    expressed by Proposition~\ref{prop:smooth-ball}.
    \item 
    There is a definable sequence of cubes $\{C_k\}_{k\in\omega}$ with radii $\{a_k\}_{k\in\omega}$
    satisfying the hypotheses of Theorem~\ref{thm:harrison} (\ref{p:harrison2}), with $q$ in place of $r$; see  Remark~\ref{rem:harrison} (\ref{p:harrison-rem2}).
     Thus, we obtain a corresponding Harrison homeomorphism $f_0$  of $\mathbb{R}^{n+1}$.
    \item The element $g$ coincides with $u\circ f_0\circ u^{-1}$ on $u(\mathbb{R}^{n+1})$, and is the identity outside.
\end{enumerate}
In case $q<1$, we replace (\ref{p:rhomq}) by:
\begin{enumerate}[(a)]
    \item[(a)'] There exists a topological embedding
    $
    u\co \mathbb{R}^{n+1}\to M^{n+1}$,
    expressed by Lemma~\ref{lem:balls}.
    \end{enumerate}    
\begin{proof}[Proof of Theorem~\ref{thm:reg}]
For part~(\ref{p:reg1}), we can choose a parametrization $u\co \mathbb{R}^{n+1}
\longrightarrow M$ as above, and then pick a (standard volume-preserving) Harrison homeomorphism given by Theorem~\ref{thm:harrison} (\ref{p:harrison2}). Since we can choose $u$ to be smooth, 
the map $u f_0 u^{-1}$ extends to a $C^q$--diffeomorphism isotopic to the identity.
Furthermore, this map preserves the volume form after a suitable smooth conjugation, by Moser's theorem.

For part~(\ref{p:reg2}), suppose we have some 
$g\in G$ satisfies $\rho_{M,q}(g)$ for some $s>q$. In other words, we have a Harrison homeomorphism $h\in\Homeo_+(\mathbb{R}^{n+1})$ with the parameters $\{r_k\}_{k\in\omega}$ as given above. We also have a $C^1$ embedding
\[
u\co \mathbb{R}^{n+1}\longrightarrow M\]
such that some element $g\in G\le \Diff^s(M)$ 
extends $u \circ h \circ u^{-1}$ by identity.
We may assume that $u(\mathbb{R}^{n+1})$ is contained in some smooth chart $v\co \mathbb{R}^{n+1}\to M$.
Then we have
\[
v^{-1}\circ u\circ h\circ u^{-1}\circ v\in\Diff^s(\mathbb{R}^{n+1}).\]
In particular, the map $h$ is conjugated by a $C^1$ map $v^{-1}\circ u$ to a $C^s$ diffeomorphism; this contradicts Theorem~\ref{thm:harrison} (\ref{p:harrison1}).
\end{proof}

A simple variation of the above argument (using $\rho_{M,1/2}$, for instance) proves that in Main Theorem, if one of $r$ or $s$ is zero then the other is also zero;
the content of Section~\ref{s:mfd} is not needed in this case.
Combined with Theorem~\ref{thm:reg}, we obtain the part ``$r=s$'' of the conclusion in Main Theorem for the case $\dim M, \dim N \ge2$.

With regard to the Main Theorem, it only remains for us to prove the case that $M$ and $N$ are copies of $S^1$.
The basic strategy for this case $\dim M=1$ is to mimic an idea in~\cite{KK2020crit}, which is to construct diffeomorphisms
of the interval and of the circle of specified regularity. 
We first note the following consequence of Theorem 2.9 in~\cite{KK2020crit}, which asserts that a regularity gives an upper asymptotic bound for the density of the set $A_f$ of ``indices with big displacements'' of increasing powers $f^{N_i}$.

\begin{lem}\label{lem:delta-fast-1}
   Let $r\in\R_{\geq 1}$ be written as $r=k+\tau$, where $k\in\N$ and $\tau\in [0,1)$, and let
   $\delta>0$.
   Suppose that:
   \begin{enumerate}
       \item $f$ is a $C^r$ diffeomorphism of $I$.
       \item $f$ is supported on an infinite collection
       $\{J_i\subseteq I\}_{i\in\omega}$ of disjoint intervals
       such that $\sup J_i<\inf J_{i+1}$ for each $i$.
       \item $\{N_i\}_{i\in\omega}\subseteq \N$ is a sequence of natural numbers such that
       \[\sup_i \frac{N_i}{i^{k-1+\tau}}<\infty.\]
   \end{enumerate}
   Write $i\in A_f$ if there exists a point $x\in J_i$ such that
   \[\frac{|f^{N_i}(x)-x|}{|J_i|}\geq \delta.\] Then
   \[\lim_{n\to\infty}\frac{|A_f\cap [0,n]|}{n}= 0.\]
\end{lem}

On the other hand, there exists a diffeomorphism of each specified regularity having an optimal displacements in the following sense; the proof is given in Theorem 2.14 in~\cite{KK2020crit}:
\begin{lem}\label{lem:delta-fast-2}
    Let $r\in\R_{\geq 1}$ be written as $r=k+\tau$, where $k\in\N$ and $\tau\in [0,1)$,
    let $\delta>0$, and let $\{J_i\subseteq I\}_{i\in\omega}$ be a collection
    of disjoint intervals
       such that $\sup J_i<\inf J_{i+1}$ for each $i$. If $\{N_i\}_{i\in\omega}\subseteq \N$
       is a sequence of natural numbers such that \[\inf_i N_i\cdot |J_i|^{k-1+\tau}\geq 1,\]
       then there exists a $C^r$ diffeomorphism $f$ of $I$ such that:
       \begin{enumerate}
           \item $f$ is supported on $\bigcup_i J_i$.
           \item For all $i$, there exists a point $x\in J_i$ such that
           \[\frac{|f^{N_i}(x)-x|}{|J_i|}\geq \delta.\]
       \end{enumerate}
\end{lem}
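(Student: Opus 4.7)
The strategy is to build $f$ by performing a local construction on each interval $J_i$ separately, using a standard translation-by-bump model rescaled so that the derivative bounds depend on the hypothesis in exactly the right way, and then to check that the concatenation is globally $C^r$. On each $J_i$ I will produce a $C^r$ diffeomorphism $f_i$ of $J_i$, equal to the identity in a neighborhood of $\partial J_i$, such that some point $x_i\in J_i$ is moved by at least $\delta|J_i|$ under $f_i^{N_i}$. Defining $f$ to act as $f_i$ on each $J_i$ and as the identity elsewhere then immediately gives the support condition and the displacement condition. The lemma therefore reduces to (a) making a careful local choice of $f_i$, and (b) verifying that the resulting $f$ is $C^r$ across the accumulation set $L$ of the $J_i$.

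For (a), fix once and for all a smooth model $\phi\in\Diff^{\infty}_+([0,1])$ which is the identity in a neighborhood of $\{0,1\}$ and whose restriction to some fixed subinterval $[a,b]\subset(0,1)$ equals the translation $x\mapsto x+\beta_0$, where $\beta_0\in(0,1)$ is a small fixed constant. Iterating $\phi$ pushes a point through $[a,b]$ at constant speed $\beta_0$ per iterate while it remains in $[a,b]$. For each $i$, let $T_i\colon[0,1]\to J_i$ be the affine identification and set $\ell_i:=|J_i|$; define a rescaled model $\phi_i$ on $[0,1]$, supported away from the endpoints, whose restriction to a middle subinterval is the translation by $\beta_i:=c\delta/N_i$ for a small universal constant $c>0$. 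Then $N_i$ iterates of $\phi_i$ translate some point of $[0,1]$ by at least $c\delta$. Setting $f_i:=T_i\circ\phi_i\circ T_i^{-1}$ yields the displacement $|f_i^{N_i}(x_i)-x_i|\ge c\delta\,\ell_i$, and after rescaling $c$ if necessary we exceed $\delta\ell_i$.

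For (b), the only nontrivial point is that $\|f_i-\mathrm{id}\|_{C^{k,\tau}(J_i)}$ is uniformly bounded in $i$. Affine rescaling by $T_i$ multiplies the $j$-th derivative by $\ell_i^{1-j}$ and the $\tau$-Hölder seminorm of the $k$-th derivative by an additional $\ell_i^{-\tau}$. Since $\|\phi_i-\mathrm{id}\|_{C^{k,\tau}([0,1])}\lesssim \beta_i$, we obtain
\[
\|f_i-\mathrm{id}\|_{C^{k,\tau}(J_i)}\;\lesssim\;\frac{\beta_i}{\ell_i^{k-1+\tau}}\;=\;\frac{c\delta}{N_i\,\ell_i^{k-1+\tau}}\;\le\;c\delta,
\]
where the final inequality is exactly the hypothesis $N_i\,\ell_i^{k-1+\tau}\ge 1$. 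This uniform bound, combined with the fact that each $f_i-\mathrm{id}$ vanishes in a neighborhood of the endpoints of $J_i$ (so that derivatives of all orders vanish at $\partial J_i$), implies that $f-\mathrm{id}$ has a well-defined $k$-th derivative on $I$ which vanishes on $I\setminus\bigsqcup_i J_i\supseteq L$, and its $\tau$-Hölder constant at a pair $(x,y)$ lying in different intervals is controlled by applying the uniform bound together with the vanishing-at-endpoints inside whichever $J_i$ contains $x$ or $y$.

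The principal obstacle is (b): verifying $C^r$ regularity at the accumulation points of the supports. The qualitative picture is clear, but the scaling exponents $k-1+\tau$ must match exactly, which is why the precise form of the hypothesis is used. Once the uniform $C^{k,\tau}$ estimate on each $J_i$ is in hand, the passage to global regularity is a short but slightly delicate bookkeeping argument comparing points in different intervals via a point where the $k$-th derivative of $f-\mathrm{id}$ vanishes; this is the step where the affine-scaling calculation above must be carried out with care.
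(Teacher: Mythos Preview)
The paper does not prove this lemma; it simply records it as following immediately from Theorem~2.14 of~\cite{KK2020crit}. Your direct construction is therefore already a different route, and for non-integer $r$ (that is, $\tau>0$) it is essentially correct: writing $\ell_i:=|J_i|$, the uniform bound $[g_i^{(k)}]_\tau\le C$ together with the vanishing of $g_i^{(k)}$ at $\partial J_i$ forces $\|g_i^{(k)}\|_\infty\le C\ell_i^\tau\to 0$, whence $g^{(k)}$ extends continuously (by zero) to the accumulation point, and your endpoint-comparison trick then yields the global $\tau$-H\"older bound. One small slip: the phrase ``after rescaling $c$ if necessary we exceed $\delta\ell_i$'' goes the wrong way---increasing $c$ improves the displacement but may violate the diffeomorphism condition $\beta_i\|\rho'\|_\infty<1$. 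The clean fix is to choose the plateau of $\rho$ to have length exceeding $\delta$ and take $\beta_i=\delta/N_i$; since $N_i\to\infty$ whenever $k-1+\tau>0$, the condition holds for all large $i$, and the finitely many exceptional $i$ can be handled individually.

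There is, however, a genuine gap when $\tau=0$. In that case your H\"older argument degenerates to a mere uniform bound on $\|g_i^{(k)}\|_\infty$, and this is \emph{not} enough to make $g^{(k)}$ continuous at the accumulation point~$p$: one needs $\|g_i^{(k)}\|_\infty\to 0$. For your model one computes $\|g_i^{(k)}\|_\infty\asymp \delta\,\|\rho^{(k)}\|_\infty\big/(N_i\ell_i^{\,k-1})$, and under the bare hypothesis $N_i\ell_i^{\,k-1}\ge 1$ this need not decay. Indeed, already for $r=1$ and $N_i\equiv 1$ the conclusion itself appears to fail: if $f$ is supported on $J_i$ and moves some $x_i\in J_i$ by at least $\delta\ell_i$, the mean value theorem applied on $[a_i,x_i]$ forces $\sup_{J_i}|f'-1|>\delta$, so no $C^1$ map with the required displacement on every $J_i$ can have $f'$ continuous at~$p$. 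The integer-regularity case thus needs either a stronger hypothesis or a genuinely different local construction; your rescaled translation bump does not suffice there.
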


The case that $r=0$ would require separate ingredients. 
For a homeomorphism $f$, we write $\supp f$ for the complement of $\fix f$. In~\cite{KK2018JT}, the first two authors proved the \emph{abt-Lemma}, which asserts that for all compact connected one-manifold $M$ and for all $a,b,t\in\Diff^1_+(M)$
satisfying $\supp a\cap\supp b=\varnothing$,
one has $\form{a,b,t}\not\cong \mathbb{Z}^2\ast\mathbb{Z}$. The proof for this lemma has the following consequence, as noted in~\cite{KK2021book}.

\begin{lem}[{\cite[Lemma 3.8]{KK2018JT}; also see the proof of \cite[Lemma 4.3.11]{KK2021book}}]\label{lem:abt-var}
Let \[\{a,b,c,d\}\] be orientation-preserving homeomorphisms of the interval $I$ satisfying  $$\supp a\cap \supp b = \varnothing=\supp c\cap \supp d.$$
We let $S:=\supp a\cup \supp b\cup \supp c\cup \supp d$, and
$$
u:=\left[
\left[c,bdb^{-1}
\right],a
\right].
$$
\begin{enumerate}
\item If $\cl u \subseteq S$,
then either $u=1$ or  $\form{a,b,c,d}$ contains a copy of  $\mathbb{Z}\wr\mathbb{Z}$.
\item If we further assume that $a,b,c,d\in\Diff^1_+(I)$, then we have
$\cl u \subseteq S$.
\end{enumerate}
\end{lem}

\begin{proof}[Proof of the Main Theorem, when $\dim M=1$]
We may let $M=N=S^1$. 
Let us assume $r<s$ for contradiction. 
Consider first the case $r\ge1$. 
For each group between $\Diff^\infty(S^1)$ and $\Diff^1(S^1)$, we have a definable smooth parametrized interval in $S^1$; this can be obtained from the Bochner--Montgomery Theorem and the same argument as in  Proposition~\ref{prop:smooth-ball}. We fix such an interval \[\phi\colon I\longrightarrow S^1.\] If we equip $S^1$ with the usual angular metric and $I$ with the usual Euclidean metric, it follows from the fact that $\phi$ is smooth that the induced angular metric on $\phi(I)\subseteq S^1$ is bi-Lipschitz equivalent to the Euclidean metric on $I$.

We use Lemma~\ref{lem:delta-fast-1} and Lemma~\ref{lem:delta-fast-2}. 
Let us fix a rational $q$ between $r$ and $s$ and find a formula that is satisfied by an element of $G\ge\Diff^r(S^1)$ but by no element of $H\le\Diff^s(S^1)$, which expresses that such an element is a $C^{q}$ diffeomorphism of the circle but is not smoother. We encode a sequence of disjoint intervals $\{J_i\subseteq I\}_{i\in\omega}$ with rational endpoints and satisfying $\sup J_i<\inf J_{i+1}$ for all $i$, by a single definable real number. We can then construct a formula $\rho_{q}$ as in Theorem~\ref{thm:reg} (though the details are simpler in this case) which is satisfied by some element of $G$ but no element of $H$.

Let us now assume $r=0$ and $s\ge 1$, so that $G=\Homeo(S^1)$ and $H\ge\Diff^1(S^1)$. 
In AGAPE language, we can find a sentence $\rho$ expressing all of the following:
\begin{itemize}
\item There exist elements $a,b,c,d$ such that the union of the extended supports is a proper subset of $S^1$;
\item The hypothesis of Lemma~\ref{lem:abt-var} is met for $a,b,c,d$; here, we observe that orientation preserving elements and the support of an element are definable.
\item The element $u$ in the lemma is nontrivial, and $\cl \suppe u\not\subseteq S$.
\end{itemize}
By part (2) of the lemma, the sentence $\rho$ cannot hold in $H$.
On the other hand, there is a tuple $(a,b,c,d)$ in $\Homeo_+(S^1)$ such that the hypothesis of the lemma holds, such that $u\ne 1$ and such that $\form{a,b,c,d}\cong\mathbb{Z}^2\ast \mathbb{Z}^2$. Since this latter group cannot contain $\mathbb{Z}\wr\mathbb{Z}$, we see from part (1) that $\cl \suppe u\not\subseteq S$.
This shows that $G\not\models\rho$, completing the proof.
\end{proof}

\section*{Acknowledgements}
The first and the third author are supported by Mid-Career Researcher Program (RS-2023-00278510) through the National Research Foundation funded by the government of Korea.
The first and the third authors are also supported by KIAS Individual Grants (MG073601 and MG084001, respectively) at Korea Institute for Advanced Study
and by the Samsung Science and Technology Foundation under Project Number SSTF-BA1301-51. The second author is partially supported by NSF Grants DMS-2002596
and DMS-2349814, and Simons Foundation International Grant SFI-MPS-SFM-00005890. The authors thank J.~La and  C.~Rosendal for helpful discussions. The authors thank M.~Bestvina for asking
whether Proposition~\ref{prop:compact-noncompact} holds.

\bibliographystyle{plain}
\bibliography{ref}
\end{document}